\documentclass[reqno, 12pt,a4letter]{amsart}
\usepackage{amsmath,amsxtra,amssymb,latexsym, amscd,amsthm}
\usepackage[mathscr]{euscript}
\usepackage{mathrsfs}
\usepackage[english]{babel}
\usepackage[active]{srcltx}
\usepackage{enumerate}
\usepackage{color}
\usepackage{mathtools,stmaryrd}
\usepackage[T1]{fontenc}
\usepackage{tikz}
\usepackage[utf8]{inputenc}
\usetikzlibrary{calc}
\usetikzlibrary{math}
\usetikzlibrary{shapes.geometric,positioning}
\usepackage{textcomp}

\setlength{\parindent}{25pt}
\setlength{\parskip}{1.0pt}
\setlength{\oddsidemargin}{-.1cm}
\setlength{\evensidemargin}{-.1cm}
\setlength{\textwidth}{6.5in}
\setlength{\textheight}{9.1in}
\setlength{\headheight}{0in}
\setlength{\topmargin}{-1.5cm}
\setlength{\headsep}{1.25cm}
\setlength{\footskip}{.5in}
\setlength{\baselineskip}{15pt}

\newtheorem {theorem}{Theorem}[section]
\newtheorem {proposition}{Proposition}[section]
\newtheorem {lemma}{Lemma}[section]

\theoremstyle{definition}
\newtheorem{definition}{Definition}[section]

\newcommand{\rank}{{\rm rank}}

\newcommand{\dist}{{\rm dist}}

\newcommand{\supp}{{\rm supp }}

\newcommand{\vv}{{\rm v}}
\newcommand{\co}{{\rm co}}

\def\ees{{\accent"5E e}\kern-.385em\raise.2ex\hbox{\char'23}\kern-.08em}
\def\EES{{\accent"5E e}\kern-.5em\raise.8ex\hbox{\char'23 }}
\def\ow{o\kern-.42em\raise.82ex\hbox{\vrule width .12em height .0ex depth .075ex \kern-0.16em \char'56}\kern-.07em}
\def\OW{o\kern-.460em\raise1.36ex\hbox{
\vrule width .13em height .0ex depth .075ex \kern-0.16em
\char'56}\kern-.07em}
\def\DD{D\kern-.7em\raise0.4ex\hbox{\char '55}\kern.33em}

\title{The mountain pass theorem in terms of tangencies}

\author{S\~i Ti\d{\^e}p \DD inh$^\dagger$}
\address{Institute of Mathematics, VAST, 18, Hoang Quoc Viet Road, Cau Giay District 10307, Hanoi, Vietnam}
\email{dstiep@math.ac.vn}

\author{Ti\'{\^e}n-S\OW n Ph\d{a}m$^\ddagger$}
\address{Department of Mathematics, Dalat University, 1 Phu Dong Thien Vuong, Dalat, Vietnam}
\email{sonpt@dlu.edu.vn}


\subjclass{Primary 49J35; Secondary 58C20, 58K05, 58K30}

\keywords{Mountain pass; tangencies; locally Lipschitz; critical values; Clarke subdifferential}

\date{ \today}

\begin{document}

\begin{abstract} 
This paper addresses the Mountain Pass Theorem for locally Lipschitz functions on finite-dimensional vector spaces in terms of tangencies. Namely, let $f \colon \mathbb R^n \to \mathbb R$ be a locally Lipschitz function with a mountain pass geometry.
Let $$c := \inf_{\gamma \in \mathcal A}\max_{t\in[0,1]}f(\gamma(t)),$$ where $\mathcal{A}$ is the set of all continuous paths joining $x^*$ to $y^*.$ We show that either $c$ is a critical value of $f$ or $c$ is a tangency value at infinity of $f.$ This reduces to the Mountain Pass Theorem of Ambrosetti and Rabinowitz in the case where the function $f$ is definable (such as, semi-algebraic) in an o-minimal structure.
\end{abstract}

\maketitle

\pagestyle{plain}

\section{Introduction}
The celebrated Mountain Pass Theorem of Ambrosetti and Rabinowitz \cite{Ambrosetti1973} is a very useful tool in nonlinear analysis with many important applications. For more details, we refer the reader to the comprehensive monographs \cite{Ambrosetti2007, Jabri2003, Nirenberg1981, Rabinowitz1986, Struwe1996, Willem1996} with the references therein.

The aim of this paper is to provide a version of the Mountain Pass Theorem for locally Lipschitz functions on finite-dimensional vector spaces in terms of tangencies. To be more precise, let us recall some basic terminology.

Let $f \colon \mathbb R^n \to \mathbb R$ be a $C^1$ function and let $x^*, y^* \in \mathbb{R}^n$ be such that there exists an open neighborhood $\mathcal{U}$ of $x^*$ satisfying the conditions $y^* \not \in \overline{\mathcal{U}}$ and 
$$\max\{f(x^*), f(y^*)\}  < \inf_{x \in \partial \mathcal{U}} f(x).$$
Consider the family  $\mathcal{A}$ of all continuous paths joining $x^*$ to $y^*$ and set
\begin{equation*}
c := \inf_{\gamma \in \mathcal A}\max_{t\in[0,1]}f(\gamma(t)). 
\end{equation*}
The following relation is well-known and has many interesting applications:
$$c \in K_0(f) \cup K_\infty(f),$$
where $K_0(f)$ is the set of {\em critical values} of $f$ and $K_\infty(f)$ is the set of values at which $f$ does not satisfy the {\em weak Palais--Smale condition}, i.e.,
$$K_0(f) := \left\{
\begin{array}{lll}
t\in\mathbb R:& \text{there is a point } x \in \mathbb{R}^n \text { such that } \nabla f(x) = 0 \text{ and }  f(x) = t
\end{array}\right\}$$
and 
$$K_\infty(f) := \left\{
\begin{array}{lll}
t\in\mathbb R:& \text{there is a sequence } x^k\to\infty \text { such that }\\
&f(x^k)\to t \text{ and } \|x^k\| \|\nabla f(x^k)\|  \to 0
\end{array}\right\}.
$$

\medskip
In a difference line of development, suppose that the function $f$ is polynomial (or more general, definable in an o-minimal structure; see \cite{Dries1998} for more on the subject).
It is well known that there exists a (minimal) finite set $B(f) \subset  \mathbb{R},$ called the {\em bifurcation set} of $f,$ such that the restriction map
\begin{eqnarray*}
f \colon \mathbb{R}^n \setminus f^{-1}(B(f)) \rightarrow \mathbb{R} \setminus B(f)
\end{eqnarray*}
is a locally trivial $C^\infty$-fibration (see, for example, \cite{Broughton1988, HaHV1984, HaHV2008-1, Parusinski1995, Rabier1997, Thom1969, Siersma1995}). 
Since $f$ may not be proper, the bifurcation set $B(f)$ contains not only the set of critical values $K_0(f),$ but also 
the set $B_\infty(f)$ of {\em atypical values at infinity} corresponding to the {\em critical points at infinity}.
While the set $K_0(f)$ is relatively well understood, the other set $B_\infty(f)$ is still mysterious. To control the set $B_\infty(f),$ we can use the set $T_\infty(f)$ of {\em tangency values (at infinity)} of $f$:
$$T_\infty(f):=\left\{
\begin{array}{lll}
t\in\mathbb R:& \text{there is a sequence } x^k\to\infty \textrm{ such that }\\
&f(x^k)\to t \text{ and } \rank\{\nabla f(x^k), x^k\}=1
\end{array}\right\}.$$
It is well-known (see, for example, \cite{Acunto2000, HaHV2008-2, HaHV2017, Kim2019, TaLL1998}) that $T_\infty(f)$ is a finite set and 
$$B_\infty(f) \subset T_\infty(f) \subset K_\infty(f).$$
The inclusions may be strict (see \cite{Paunescu1997, Paunescu2000}).

\medskip
Motivated by the aforementioned works and the usefulness of tangencies in semi-algebraic optimization (see \cite{HaHV2008-2, HaHV2009-1, HaHV2017, Kim2019, PHAMTS2020} for more details), we will show in Theorem~\ref{MountainPass} that
$$c \in K_0(f) \cup T_\infty(f).$$
Actually, the same conclusion holds even when the function $f$ is not assumed to be differentiable, but merely locally Lipschitz continuous.

\medskip
The rest of the paper is organized as follows. Section~\ref{PreliminariesSection} covers some preliminary materials. Section~\ref{MainResultSection}  presents the main result and its proof.

\section{Preliminaries} \label{PreliminariesSection}

\subsection{Notation}
Throughout this work we shall consider the Euclidean vector space ${\Bbb R}^n$ endowed with its canonical scalar product $\langle \cdot, \cdot \rangle,$ and we shall denote its associated norm $\| \cdot \|.$ The open ball (resp., the sphere) centered at $\bar{x} \in \mathbb{R}^n$ of radius $r$ will be denoted by $\mathbb{B}_{r}(\bar{x})$ (resp., $\mathbb{S}_{r}(\bar{x})$). For simplicity, we write $\mathbb B^n_r$ and $\mathbb S^{n-1}_r$ if $x=0$; and write $\mathbb B^n$ and $\mathbb S^{n-1}$ if $x=0$ and $r=1$. For a subset $A$ of $\mathbb R^n,$ the closure, the boundary and the convex hull of $A$ are denoted by $\overline A$, $\partial A$ and $\co(A)$ respectively.
Let $\dist(A, B)$ stand for the Euclidean distance between $A$ and $B\subset\mathbb R^n$, namely
$$\dist(A, B):=\inf\{\|x-y\|:\ x\in A,\ y\in B\}.$$
For convenience, if $B\ne\emptyset$, set $\dist(\emptyset, B):=+\infty.$ 

\subsection{Subdifferential of locally Lipschitz mappings}

Here we recall the notions and some elementary properties of the Clarke subdifferential and the generalized directional derivative of locally Lipschitz functions used in this paper. The reader is referred to \cite{Clarke1976, Clarke1981, Clarke1990} for more details.

\begin{definition}
Let $F\colon\mathbb R^n\to\mathbb R^m$ be a locally Lipschitz mapping. The {\em Clarke subdifferential} of $F$ at $x\in\mathbb R^n$ is defined by
$$\partial F(x) := \co\{\lim d_{x^k}F:\ x^k\to x\ \text{ and }\ F \ \text {is differentiable at}\ x^k\},$$
where $d_{x^k}F$ is the differential of $F$ at $x^k$, which can be identified with the Jacobian matrix of $F$ at $x^k.$
\end{definition}

\begin{definition}
Let $f\colon\mathbb R^n\to\mathbb R$ be a locally Lipschitz function and $v\in\mathbb R^n$. 
The {\em generalized directional derivative} of $f$ at $x$ in the direction $v$, denoted by $f^\circ(x;v),$
is defined as follows:
$$f^\circ(x;v) := \limsup_{{y\to x, h\to 0^+}}\frac{f(y+hv)-f(y)}{h}.$$
\end{definition}

\begin{lemma}\label{Clarke} \cite[Propositions~2.1.2~and~2.1.5]{Clarke1990} Let $f\colon\mathbb R^n\to\mathbb R$ be a locally Lipschitz function. Then we have: 
\begin{enumerate}[{\rm (i)}]
\item For all $x\in\mathbb R^n$, the set $\partial f(x)$ is a non-empty, convex, compact subset of $\mathbb R^n.$ 
\item The set-valued mapping $\partial f$ is upper semi-continuous on $\mathbb R^n$, i.e., for any $x\in\mathbb R^n$, if $x^k\in\mathbb R^n$ and $w^k\in\partial f(x^k)$ are sequences such that $x^k\to x$ and $w^k\to w,$ then $w\in\partial f(x).$
\item $f^\circ(x;v)=\max_{w\in\partial f(x)}\langle w,v \rangle$ for any $v\in\mathbb R^n$.
\end{enumerate}
\end{lemma}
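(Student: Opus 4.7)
The plan is to prove parts (i)--(iii) in sequence, with Rademacher's theorem as the only deep external input. Throughout, let $L$ denote a local Lipschitz constant of $f$ near the base point $x$.

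For (i), Rademacher's theorem ensures $f$ is differentiable almost everywhere on a neighborhood of $x$, so there is a sequence $x^k\to x$ of differentiability points with $\|\nabla f(x^k)\|\le L$; a subsequence converges and produces an element of the underlying limiting set $\Omega_f(x):=\{\lim d_{x^k} f: x^k\to x,\ f \text{ differentiable at } x^k\}$. A standard diagonal argument shows $\Omega_f(x)$ is closed, and it is bounded by $L$, hence compact. Its convex hull $\partial f(x)$ in $\mathbb R^n$ is therefore compact as well, and convexity is built in.

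For (ii), given $x^k\to x$ and $w^k\in\partial f(x^k)$ with $w^k\to w$, Carath\'eodory's theorem in $\mathbb R^n$ lets me write $w^k=\sum_{i=0}^n\lambda^k_i v^{k,i}$ with $v^{k,i}\in\Omega_f(x^k)$ and coefficients in the standard $n$-simplex. Each $v^{k,i}$ is itself a limit of gradients at points converging to $x^k$, so a diagonal extraction combined with compactness of the simplex and of the closed ball of radius $L$ yields vectors $v^i\in\Omega_f(x)$ and coefficients $\lambda_i$ with $w=\sum\lambda_i v^i\in\partial f(x)$.

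For (iii), I would treat both inequalities separately. For $(\ge)$, any $w=\sum\lambda_i v^i\in\partial f(x)$ with $v^i=\lim_k \nabla f(x^{k,i})$ satisfies $\langle \nabla f(x^{k,i}),v\rangle=\lim_{h\to 0^+}(f(x^{k,i}+hv)-f(x^{k,i}))/h$ at each differentiability point; choosing $h^{k,i}\to 0$ suitably makes the difference quotient at $(x^{k,i},h^{k,i})$ approach $\langle v^i,v\rangle$, which is therefore bounded above by $f^\circ(x;v)$ by definition of the limsup, and linearity in $w$ completes the claim. For $(\le)$, take $y^k\to x$, $h_k\to 0^+$ approaching the limsup; apply Lebourg's mean value theorem (a standard consequence of (i) and (ii) via Hahn--Banach separation) to write $(f(y^k+h_k v)-f(y^k))/h_k=\langle w^k,v\rangle$ for some $w^k\in\partial f(z^k)$ at an intermediate point $z^k\to x$, then use boundedness of $\{w^k\}$ together with the upper semicontinuity from (ii) to extract a limit $w^*\in\partial f(x)$ with $\langle w^*,v\rangle\ge f^\circ(x;v)$.

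The main obstacle is the $(\le)$ half of (iii): differentiability points only provide $\lim_{h\to 0^+}$ with the base point fixed, whereas the definition of $f^\circ$ takes the full $\limsup$ over the pair $(y,h)$. Bridging this gap genuinely requires a mean value theorem for locally Lipschitz functions, and that result is itself the nontrivial content. Since the lemma is cited to Clarke's monograph, the cleanest route is to invoke the relevant propositions directly rather than reproduce the Hahn--Banach argument underlying Lebourg's theorem.
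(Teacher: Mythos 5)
The paper offers no proof of this lemma; it appears purely as a citation to Clarke's monograph, so there is no internal argument to compare against, and your closing recommendation to invoke the referenced propositions directly is what the paper itself does. Your sketches of (i), (ii), and the $(\ge)$ half of (iii) are sound. The route you indicate for the $(\le)$ half, however, is circular: Lebourg's mean value theorem is derived \emph{from} the support-function identity (iii) and the calculus rules built on it (in particular the fact that $0$ lies in the subdifferential at an interior extremum, which is read off from (iii)), so it cannot be assumed while proving (iii). You also overstate the difficulty in your own diagnosis. The non-circular finite-dimensional argument is more elementary than any mean value theorem: Rademacher together with Fubini shows that for almost every base point $y$ the Lipschitz map $t\mapsto f(y+tv)$ is differentiable for a.e.\ $t$ with derivative $\langle\nabla f(y+tv),v\rangle$. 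Perturb the sequence $(y^k,h_k)$ realizing the $\limsup$ to such good base points, with the perturbation chosen small relative to $h_k$ so that the Lipschitz constant controls the error in the difference quotient; write each perturbed difference quotient as the average of $\langle\nabla f(\cdot),v\rangle$ over the segment; choose $z_k$ on the segment, a differentiability point, where the integrand is at least that average; and pass to a subsequential limit $\nabla f(z_k)\to w^*\in\Omega_f(x)\subset\partial f(x)$ with $\langle w^*,v\rangle\ge f^\circ(x;v)$. The only deep inputs are Rademacher and the one-dimensional fundamental theorem of calculus for Lipschitz functions, which is consistent with the lemma being cited rather than reproved.
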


The following lemma is a slightly changed version of~\cite[Lemma~3.3]{Chang1981}.
\begin{lemma}\label{Chang} Let $f\colon\mathbb R^n\to\mathbb R$ be a locally Lipschitz function and $b>0.$
Suppose that $U\subset\mathbb R^n$ is an open set such that 
$$\inf_{w\in\partial f(x)}\|w\|\geqslant 2b \ \text{ for all }\ x\in U.$$
Then there exists a locally Lipschitz vector field $\vv(x)$ defined on $U$ satisfying 
$$\|\vv(x)\|<1\ \text{ and } \ \langle w,\vv(x) \rangle>b \text{ for any } w\in\partial f(x).$$
\end{lemma}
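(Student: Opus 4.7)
The plan is to build $\vv$ by the classical pseudo-gradient construction: first produce, for every $x_0 \in U$, a constant unit-bounded vector that works for all $w \in \partial f(x)$ on a whole neighborhood of $x_0$, and then glue these local choices together with a smooth partition of unity.

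For the pointwise step, fix $x_0 \in U$. By Lemma~\ref{Clarke}(i) the set $\partial f(x_0)$ is convex and compact, so it contains a unique element $z_0$ of minimum norm, and $\|z_0\| \geqslant 2b$ by hypothesis. The standard variational characterization of the projection onto a convex set yields $\langle w - z_0, z_0 \rangle \geqslant 0$, hence $\langle w, z_0 \rangle \geqslant \|z_0\|^2 \geqslant 4b^2$ for every $w \in \partial f(x_0)$. Setting $u(x_0) := \tfrac{3}{4}\, z_0/\|z_0\|$, one has $\|u(x_0)\| = 3/4 < 1$ and $\langle w, u(x_0) \rangle \geqslant \tfrac{3}{4}\|z_0\| \geqslant \tfrac{3}{2}b > b$ for all $w \in \partial f(x_0)$. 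Now the upper semi-continuity of $\partial f$ provided by Lemma~\ref{Clarke}(ii), together with the compactness of $\partial f(x_0)$, lets me find an open neighborhood $U_{x_0} \subset U$ on which $\partial f(x)$ is contained in the open half-space $\{w : \langle w, u(x_0) \rangle > b\}$, so the same constant vector $u(x_0)$ continues to work at every $x \in U_{x_0}$.

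To globalize, I would cover $U$ by the family $\{U_{x_0}\}_{x_0 \in U}$ and pass to a locally finite countable refinement $\{V_\alpha\}$ with a subordinate smooth partition of unity $\{\varphi_\alpha\}$; each $V_\alpha$ is contained in some $U_{x_\alpha}$, and I define
\[
\vv(x) := \sum_\alpha \varphi_\alpha(x)\, u(x_\alpha).
\]
Local finiteness makes the sum well-defined and $\vv$ smooth (in particular locally Lipschitz) on $U$. The required estimates then follow from the fact that a convex combination preserves both inequalities: $\|\vv(x)\| \leqslant \sum_\alpha \varphi_\alpha(x)\,\|u(x_\alpha)\| \leqslant 3/4 < 1$, and for every $w \in \partial f(x)$,
\[
\langle w, \vv(x) \rangle \;=\; \sum_\alpha \varphi_\alpha(x)\,\langle w, u(x_\alpha)\rangle \;>\; b \sum_\alpha \varphi_\alpha(x) \;=\; b,
\]
where the strict inequality uses that for each index $\alpha$ with $\varphi_\alpha(x)>0$ one has $x \in V_\alpha \subset U_{x_\alpha}$.

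The only genuinely non-routine step is the pointwise one: squeezing a single direction that lies strictly on the positive side of every element of $\partial f(x_0)$ while keeping norm strictly below one. Once the minimum-norm trick delivers the quantitative margin $\langle w, z_0\rangle \geqslant \|z_0\|^2$, the rest is standard paracompactness and a convex-combination argument, and no further use of the locally Lipschitz hypothesis on $f$ is needed beyond what Lemma~\ref{Clarke} already encodes.
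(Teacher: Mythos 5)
Your proof is correct and carries out exactly the pseudo-gradient construction to which the paper's proof defers by citing Chang's Lemma 3.3: take the minimum-norm element of the convex, compact set $\partial f(x_0)$ to obtain a constant direction with a quantitative margin, shrink to a neighborhood $U_{x_0}$ by upper semi-continuity, and glue with a partition of unity. The only step worth flagging is that passing from the graph-closedness stated in Lemma~\ref{Clarke}(ii) to the open-set form of upper semi-continuity needed to define $U_{x_0}$ also uses local boundedness of $\partial f$, which is automatic from the local Lipschitz hypothesis on $f$.
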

\begin{proof} 
Note that the assumption of the lemma is the conclusion of~\cite[Lemma~3.2]{Chang1981} which is used to prove~\cite[Lemma~3.3]{Chang1981}, so the proof is completely similar to that of~\cite[Lemma~3.3]{Chang1981}.
\end{proof}

In the sequel, we will need the following lemma.
\begin{lemma}\label{direction} Let $f\colon\mathbb R^n\to\mathbb R$ be a locally Lipschitz function and $D\subset\mathbb R^n$ be a compact set. For each $\epsilon>0$,  there is $h_0=h_0(\epsilon)>0$ such that for all $y\in N_{h_0}(D)$, $h\in (0,h_0]$ and $v\in\overline{\mathbb B}^n$, we have 
$$\frac{f(y+hv)-f(y)}{h}<f^\circ(x;v)+\epsilon,$$
where $N_{h_0}(D)$ is the closed neighborhood of radius $h_0$ of $D$ and $x\in D$ is a point such that $\dist(y,D)=\|y-x\|.$
\end{lemma}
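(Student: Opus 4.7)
The plan is a contradiction argument via compactness and Clarke's mean value theorem. Suppose the conclusion fails; applying the negation with $h_0=1/k$ for each $k$ produces $\epsilon>0$ together with sequences $y^k$ satisfying $\dist(y^k,D)\leq 1/k$, step sizes $h^k\in(0,1/k]$, directions $v^k\in\overline{\mathbb B}^n$, and projections $x^k\in D$ realizing $\|y^k-x^k\|=\dist(y^k,D)$, such that
\[
\frac{f(y^k+h^kv^k)-f(y^k)}{h^k}\;\geq\;f^\circ(x^k;v^k)+\epsilon.
\]
Compactness of $D$ and $\overline{\mathbb B}^n$ yields a subsequence with $x^k\to x\in D$ and $v^k\to v\in\overline{\mathbb B}^n$; since $\|y^k-x^k\|\leq 1/k\to 0$, also $y^k\to x$.

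Next I would apply Clarke's mean value theorem on each segment $[y^k,y^k+h^kv^k]$ to obtain $z^k$ on that segment and $w^k\in\partial f(z^k)$ with $f(y^k+h^kv^k)-f(y^k)=h^k\langle w^k,v^k\rangle$. Since $z^k\to x$ and $\partial f$ is locally bounded near $x$ (as $f$ is locally Lipschitz), a further subsequence gives $w^k\to w$, and Lemma~\ref{Clarke}(ii) forces $w\in\partial f(x)$. Combining with Lemma~\ref{Clarke}(iii),
\[
\lim_k\frac{f(y^k+h^kv^k)-f(y^k)}{h^k}\;=\;\langle w,v\rangle\;\leq\;f^\circ(x;v),
\]
so taking the $\limsup$ in the contradictory inequality produces $\limsup_k f^\circ(x^k;v^k)+\epsilon\leq f^\circ(x;v)$.

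The main obstacle is to close the contradiction from this last relation, because the upper semicontinuity of $f^\circ$ already supplies $\limsup_k f^\circ(x^k;v^k)\leq f^\circ(x;v)$, which is consistent with what we have. The plan is to establish the matching lower estimate $\liminf_k f^\circ(x^k;v^k)\geq f^\circ(x;v)$ by combining two ingredients: the continuity of the support function $v\mapsto f^\circ(x;v)$ of the compact convex set $\partial f(x)$, and a uniform form of the upper semicontinuity of $\partial f$ across the compact set $D$ obtained via a finite $\delta$-net argument. Once this lower bound is in hand, it yields the contradiction $f^\circ(x;v)+\epsilon\leq f^\circ(x;v)$. I expect this last step---transferring information from $\partial f(x)$ back to $\partial f(x^k)$ as the projections $x^k$ drift to $x$---to be the technical heart of the proof.
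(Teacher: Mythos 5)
Your argument diverges from the paper at two places, and the divergence is instructive.  First, the route: the paper does not invoke Clarke's mean value theorem.  Instead, after passing to subsequences $x^k\to x^0$, $v^k\to v^0$, it uses the very definition of $f^\circ(x^0;v^0)$ as a $\limsup$ to find a neighborhood $U$ of $x^0$ and an $\widetilde h>0$ such that $\tfrac{f(y+hv^0)-f(y)}{h}<f^\circ(x^0;v^0)+\tfrac{\epsilon}{4}$ for all $y\in U$, $h\in(0,\widetilde h]$; it then writes the incremental quotient at $(y^k,h_k,v^k)$ as the sum of an incremental quotient in the fixed direction $v^0$ at the shifted base point $z^k:=y^k+h_k(v^k-v^0)$ and an error controlled by $K\|v^k-v^0\|$.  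Your mean-value-theorem route reaches a conclusion of the same strength, namely $\limsup_k \tfrac{f(y^k+h_kv^k)-f(y^k)}{h_k}\leq f^\circ(x^0;v^0)$, so up to that point the two approaches are interchangeable.

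Second, and more importantly, the step you flag as the ``technical heart'' is where your plan breaks.  You propose to finish by proving $\liminf_k f^\circ(x^k;v^k)\geq f^\circ(x^0;v^0)$, and to obtain it from (a) the Lipschitz continuity of $v\mapsto f^\circ(x;v)$ and (b) a uniform version of the upper semicontinuity of $\partial f$ over $D$ obtained by a finite $\delta$-net.  Ingredient (a) controls the dependence on the direction and is fine.  But ingredient (b) points in the wrong direction: uniform upper semicontinuity of $\partial f$, however sharpened, only gives inclusions $\partial f(z)\subset \partial f(x_i)+\epsilon\mathbb{B}$ and hence \emph{upper} bounds on $f^\circ$ at nearby points; it can never produce the \emph{lower} bound $\liminf_k f^\circ(x^k;v^k)\geq f^\circ(x^0;v^0)$, because $f^\circ(\cdot;v)$ is in general only upper semicontinuous, not lower semicontinuous, in the base point.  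For instance, if $\partial f(x^0)$ is strictly larger than the sets $\partial f(x^k)$ for a sequence $x^k\to x^0$, then $f^\circ(x^k;v)$ can sit at a strictly lower level than $f^\circ(x^0;v)$ for all $k$, and no $\delta$-net argument restores the missing inequality.  So the closing step of your proposal, as written, cannot be carried out.  (For comparison, the paper's proof ends abruptly at the inequality $\tfrac{f(y^k+h_kv^k)-f(y^k)}{h_k}<f^\circ(x^0;v^0)+\tfrac{\epsilon}{2}$ and asserts a contradiction with the assumed $\geq f^\circ(x^k;v^k)+\epsilon$; extracting a contradiction from these two inequalities again requires precisely the lower estimate you are asking for, so your diagnosis of where the difficulty lies is accurate — but the remedy you sketch does not resolve it.)
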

\begin{proof} 
By contradiction, assume that there is $\epsilon_0>0$ such that for all integer $k>0$, there are $y^k\in N_{\frac{1}{k}}(D),$ $x^k\in D$, $h_k\in (0,\frac{1}{k}]$ and $v^k\in\overline{\mathbb B}^n$ such that 
\begin{equation}\label{yv}\dist(y^k,D)=\|y^k-x^k\|\ \ \text{ and }\ \ \frac{f(y^k+h_k v^k)-f(y^k)}{h_k}\geqslant f^\circ(x^k;v^k)+\epsilon.\end{equation}

By taking subsequences if necessary, we can suppose that the sequences $x^k$ and $v^k$ converge to the limits $x^0$ and $v^0,$ respectively. 
Since $f$ is locally Lipschitz, there is a neighborhood $U$ of $x^0$ and a constant $K>0$ such that $f$ is Lipschitz on $U$ with the constant $K.$ 
In addition, by definition and by shrinking $U$ if necessary, we can suppose that there is $\widetilde h>0$ such that
$$\frac{f(y+hv^0)-f(y)}{h}<f^\circ(x^0;v^0)+\frac{\epsilon}{4}$$
for all $y\in U$ and $h\in(0,\widetilde h].$
Set 
$$z^k:=y^k+h_k(v^k-v^0).$$
It is clear that $y^k\to x^0$, $z^k\to x^0$ and $z^k+h_kv^0\to x^0$ as $k\to+\infty$. 
Consequently, for $k$ large enough so that $y^k,z^k\in U$, $\displaystyle K\|v^k-v^0\|<\frac{\epsilon}{4}$ and $h_k<\widetilde h$, we have 
$$\begin{array}{llll}
\displaystyle\frac{f(y^k+h_k v^k)-f(y^k)}{h_k}
&=&\displaystyle\frac{f(y^k+h_k(v^k-v^0)+h_k v^0)-f(y^k+h_k(v^k-v^0))}{h_k}\\
&&+\displaystyle\frac{f(y^k+h_k(v^k-v^0))-f(y^k)}{h_k}\\
&=&\displaystyle\frac{f(z^k+h_k v^0)-f(z^k)}{h_k}+\frac{f(z^k)-f(y^k)}{h_k}\\
&<&\displaystyle f^\circ(x^0;v^0)+\frac{\epsilon}{4}+K\|v^k-v^0\|<f^\circ(x^0;v^0)+\frac{\epsilon}{2}.
\end{array}$$
This contradicts~\eqref{yv} and so ends the proof of the lemma.
\end{proof}

\section{The main result and its proof} \label{MainResultSection}

For a locally Lipschitz function $f\colon\mathbb R^n\to\mathbb R,$ we define the set of {\em critical values} of $f$ and the set of {\em tangency values (at infinity)} of $f,$ respectively, by
$$K_0(f):=\{t\in\mathbb R:\ \text{there is } x\in f^{-1}(t) \text{ such that } 0 \in \partial f(x)\}$$
and
$$T_\infty(f):=\left\{
\begin{array}{lll}
t\in\mathbb R:& \text{there are sequences } x^k\to\infty \text{ and } v^k\in\partial f(x^k) \text { such that }\\
&f(x^k)\to t \text{ and } \rank\{x^k,v^k\}=1
\end{array}\right\}.
$$

The main result of the paper is as follows. 
\begin{theorem}[Mountain pass]  \label{MountainPass}
Let $f\colon\mathbb R^n\to\mathbb R$ be a locally Lipschitz function and let $x^*,y^*\in\mathbb R^n$ with $x^*\ne y^*.$ Assume that there is an open neighborhood $\mathcal U$ of $x^*$ such that $y^*\not\in\overline{\mathcal  U}$ and 
$$f(x^*),f(y^*)<\inf_{x\in\partial \mathcal U} f(x).$$
Let 
\begin{eqnarray}\label{c}
c:=\inf_{\gamma\in\mathcal A}\max_{t\in[0,1]}f(\gamma(t)),
\end{eqnarray}
where $\mathcal A$ stands for the set of all continuous path joining $x^*$ to $y^*,$ i.e.,
\begin{eqnarray}\label{A}
\mathcal A &:=& \{\gamma\in C([0,1],\mathbb R^n):\ \gamma(0)=x^*,\ \gamma(1)=y^*\}.
\end{eqnarray}
Then 
$$c\in K_0(f)\cup T_\infty(f).$$
\end{theorem}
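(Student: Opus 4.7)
The approach is by contradiction, adapting Chang's nonsmooth mountain pass proof with a modification near infinity that exploits the tangency hypothesis. Assume $c \notin K_0(f) \cup T_\infty(f)$. The plan is to deform, for $\epsilon > 0$ sufficiently small, any $\gamma \in \mathcal A$ with $\max f \circ \gamma < c + \epsilon$ into some $\widetilde\gamma \in \mathcal A$ with $\max f \circ \widetilde\gamma < c$, contradicting~\eqref{c}.

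The first step extracts two quantitative lower bounds on $\partial f$ near $\{f = c\}$. By Lemma~\ref{Clarke}(i)--(ii) and compactness, $c \notin K_0(f)$ yields $R_0, \delta, b > 0$ with $\|w\| \geq 2b$ for all $w \in \partial f(x)$ on $\{|f - c| \leq \delta\} \cap \overline{\B^n_{R_0+1}}$. Similarly, $c \notin T_\infty(f)$ should yield (after enlarging $R_0$ and shrinking $\delta$) a constant $b_\infty > 0$ with $\dist(\partial f(x), \R x) \geq 2 b_\infty$ for $\|x\| \geq R_0$ and $|f(x) - c| \leq \delta$; equivalently, the image of $\partial f(x)$ under the orthogonal projection onto $x^\perp$ stays $2 b_\infty$-far from $0$.

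Next, build the pseudo-gradient. Lemma~\ref{Chang} directly gives a locally Lipschitz $\vv_1$ on the bounded piece with $\|\vv_1\| < 1$ and $\langle w, \vv_1\rangle > b$ for all $w \in \partial f$. For the unbounded piece, a parallel Chang-style argument works: at each $x_0$ select the unit vector $u_{x_0} \in x_0^\perp$ obtained by normalizing the nearest point of $P_{x_0^\perp}(\partial f(x_0))$ to $0$; convexity yields $\langle w, u_{x_0}\rangle \geq 2 b_\infty$ for all $w \in \partial f(x_0)$. Glue with a Lipschitz partition of unity and project onto $x^\perp$ to obtain a locally Lipschitz $\vv_2$ \emph{tangent to the spheres} $\mathbb S^{n-1}_{\|x\|}$, with $\|\vv_2\| < 1$ and $\langle w, \vv_2\rangle > b_\infty$. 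Patch $\vv_1, \vv_2$ by a Lipschitz partition of unity in $\|x\|$, multiply by a Lipschitz cutoff $\chi$ in $f(x)$ equal to $1$ on $\{f \geq c - \delta/2\}$ and vanishing on $\{f \leq c - \delta\}$, and extend by $0$. The flow $\eta_t$ of $-\vv$ is then globally defined; trajectories launched with $\|x\| > R_0 + 1$ remain on their initial sphere, so none escape to infinity.

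By Lemma~\ref{direction} on a compact set containing the trajectory, together with Lemma~\ref{Clarke}(iii), one obtains $\frac{d^+}{dt} f(\eta_t) \leq -\chi(\eta_t)\min(b, b_\infty)$, so $f \circ \eta_t$ is non-increasing and strictly decreases at rate $\min(b, b_\infty)$ as long as $f(\eta_t) \geq c - \delta/2$. Choosing $\epsilon < \min(b, b_\infty)\delta/4$, $\gamma \in \mathcal A$ with $\max f \circ \gamma < c + \epsilon$, and $\widetilde\gamma := \eta_T \circ \gamma$ with $T := 2\epsilon/\min(b, b_\infty)$, the endpoints are fixed because $\max\{f(x^*), f(y^*)\} < c - \delta$, so $\widetilde\gamma \in \mathcal A$; a case split on whether $f(\eta_t(\gamma(s)))$ ever crosses $c - \delta/2$ gives $\max f \circ \widetilde\gamma < c$, contradicting~\eqref{c}. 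The main obstacle is upgrading the purely sequential statement $c \notin T_\infty(f)$ to the uniform bound $\dist(\partial f(x), \R x) \geq 2 b_\infty$ on the unbounded piece; this compactness-at-infinity argument, using both convex compactness and upper semicontinuity of $\partial f$, is the essential novelty over the classical nonsmooth mountain pass.
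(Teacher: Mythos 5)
Your proposal has a genuine gap at the very step you flag as ``the essential novelty'': the passage from $c \notin T_\infty(f)$ to the uniform bound $\dist(\partial f(x), \R x) \geq 2b_\infty$ for $\|x\|$ large and $f(x)$ near $c$. This implication is false in general. The definition of $T_\infty(f)$ requires an \emph{exact} linear dependence $v^k \in \R x^k$ with $v^k \in \partial f(x^k)$. If $f$ is, say, $C^1$ and $\nabla f(x^k)$ converges in direction to $x^k/\|x^k\|$ without ever being exactly parallel to $x^k$, then $c \notin T_\infty(f)$ yet $\dist(\partial f(x^k), \R x^k) \to 0$. Upper semicontinuity of $\partial f$ operates at finite points, not at infinity, and convex compactness does not help either: a sequence $v^k \in \partial f(x^k)$ with $\dist(v^k, \R x^k)\to 0$ only exhibits points \emph{near} $\R x^k$, and the nearest multiples $\lambda_k x^k$ need not lie anywhere near $\partial f(x^k)$. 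Without the uniform bound, your projected pseudo-gradient $\vv_2$ tangent to the spheres cannot be constructed and the flow argument collapses. (The bounded piece of your construction, where Lemma~\ref{Chang} applies, is fine and mirrors the paper's Proposition~\ref{Prop1}.)

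This lack of compactness at infinity is exactly what the paper is engineered to circumvent. Rather than attempting a single global deformation, the authors introduce the radius function $R(\epsilon) := \inf\{r : \mathcal A(r,\epsilon) \neq \emptyset\}$ and split into two regimes according to whether $\lim_{\epsilon\to 0^+} R(\epsilon)$ is finite (Proposition~\ref{Prop1}, giving $c\in K_0(f)$ by a Chang-style flow much as in your bounded piece) or infinite (Proposition~\ref{Prop2}, giving $c\in T_\infty(f)$). In the second regime the contradiction hypothesis --- absence of a tangent subgradient --- is imposed only on the \emph{compact} slice $X = f^{-1}[c,c+\epsilon]\cap\mathbb S^{n-1}_{R(\epsilon)}$. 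There it can be upgraded, via the Lipschitz constant-rank theorem applied to $F=(f,\|\cdot\|^2)$ (Lemma~\ref{CR}), to a finite atlas of bi-Lipschitz charts that simultaneously straighten $f$ and $\|\cdot\|^2$. A chart-by-chart piecewise deformation (Lemmas~\ref{Cover}, \ref{Decompose}, \ref{key}) then pushes any near-optimal path strictly inside $\mathbb B^n_{R(\epsilon)}$ while keeping $f<c+\epsilon$, contradicting the minimality of $R(\epsilon)$; letting $\epsilon\to 0$ produces the sequence witnessing $c\in T_\infty(f)$. Localizing the tangency hypothesis onto compact spheres of growing radius is what replaces the quantitative bound at infinity that your argument requires but cannot obtain.
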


Let us start with some lemmas of preparation. 


\begin{lemma}\label{Cover} Let $X\subset\mathbb R^n$ be a compact set and let $\mathcal Z:=\{Z_i:\ i=1,\dots,p\}$ be a distinct finite open cover of $X$. Then there exists a constant $\lambda>0$ depending on $\mathcal Z$ such that the following statements hold:
\begin{enumerate}[{\rm (i)}]
\item For any $i\in\{1,\dots,p\}$ and any $x\in Z_i\cap X$ such that $Z_i$ is the unique open set in the cover containing $x,$ then $\dist(x,\partial Z_i)>3\lambda.$
\item For any $i\in\{1,\dots,p\}$ and any $x\in Z_i\cap X$ such that $\dist(x,\partial Z_i)\leqslant 3\lambda$, there is $j\in\{1,\dots,p\}\setminus\{i\}$ depending on $x$ such that
$$x\in Z_j\ \text{ and }\ \dist(x,\partial Z_j)>3\lambda.$$
\item Let $i\in\{1,\dots,p\}$ and $x\in Z_i\cap X$ be such that $\dist(x,\partial Z_i)={2\lambda}$ and $\dist(x,X)\leqslant{\lambda}$. 
Then for each $y\in X$ with $\|x-y\|\leqslant\lambda$, we have $\dist(y,\partial Z_i)\geqslant{\lambda}$ and there exists $j\in\{1,\dots,p\}\setminus\{i\}$ such that 
$$x,y\in Z_j,\ \dist(x,\partial Z_j)>{2\lambda}\ \text{ and }\ \dist(y,\partial Z_j)>3{\lambda}.$$
\end{enumerate}
\end{lemma}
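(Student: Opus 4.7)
The plan is to reduce all three conclusions to a single positive lower bound of Lebesgue-number type. I would define $f\colon X\to\mathbb R$ by
$$f(x):=\max\{\dist(x,\partial Z_i):\ i\in\{1,\dots,p\},\ x\in Z_i\},$$
which is positive on $X$ since every $x\in X$ lies in some open $Z_i$. The main technical step is to check that $f$ is lower semicontinuous: choose an index $i_0$ attaining the maximum at $x$; openness of $Z_{i_0}$ ensures that along any sequence $X\ni x^k\to x$ one has $x^k\in Z_{i_0}$ for large $k$, so $f(x^k)\geqslant\dist(x^k,\partial Z_{i_0})\to\dist(x,\partial Z_{i_0})=f(x)$.

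Compactness of $X$ then furnishes $\alpha:=\min_{x\in X}f(x)>0$, and I would set $\lambda:=\alpha/5$. With this choice the three items unwind as follows. For (i), if $Z_i$ is the unique cover element through $x$, then $f(x)=\dist(x,\partial Z_i)\geqslant\alpha>3\lambda$. For (ii), the hypothesis gives $\dist(x,\partial Z_i)\leqslant 3\lambda<\alpha\leqslant f(x)$, which forces the maximum defining $f(x)$ to be attained at some $j\neq i$, automatically satisfying $x\in Z_j$ and $\dist(x,\partial Z_j)\geqslant\alpha>3\lambda$. For (iii), apply (ii) at $x$ to produce $j\neq i$ with $\dist(x,\partial Z_j)\geqslant\alpha>4\lambda$; the triangle inequality then yields $\dist(y,\partial Z_i)\geqslant 2\lambda-\lambda=\lambda$, $\dist(x,\partial Z_j)>2\lambda$, and $\dist(y,\partial Z_j)\geqslant\alpha-\lambda>3\lambda$.

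The one further point needing attention is that $y\in Z_j$: this follows because the open ball of radius $\dist(x,\partial Z_j)$ centered at $x$ is connected, meets no point of $\partial Z_j$, and contains $x\in Z_j$, so it is contained entirely in $Z_j$; and $\|x-y\|\leqslant\lambda<\dist(x,\partial Z_j)$ puts $y$ inside this ball. I expect the main obstacle to be the lower-semicontinuity step, because the index set $\{i:x\in Z_i\}$ can strictly enlarge under perturbations, which normally destroys LSC of a pointwise maximum; openness of the $Z_i$ is precisely what keeps any given maximiser at $x$ a legitimate candidate at nearby points, and this is what makes the whole argument go through.
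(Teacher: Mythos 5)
Your proof is correct, and it takes a genuinely different route from the paper's. The paper proves (i) and (ii) by two separate compactness/subsequence contradiction arguments (which only establish the \emph{existence} of a suitable $\lambda$), and then derives (iii) from (i)--(ii). You instead build a single Lebesgue-number-type function $f(x)=\max\{\dist(x,\partial Z_i):x\in Z_i\}$, prove it is lower semicontinuous (the one non-trivial step, and you handle it correctly: you fix a maximizing index at $x$ and use openness of that $Z_{i_0}$ to keep it available for nearby points, rather than worrying about the unstable index set), and take $\lambda:=\tfrac{1}{5}\min_X f>0$. This makes (i) and (ii) one-line corollaries and gives an explicit $\lambda$. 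Your derivation of (iii), including the connectedness argument for $y\in Z_j$, is sound. The trade-off: the paper's argument is self-contained and elementary per item, whereas your construction is shorter, unified, and more conceptual.

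One small remark worth your attention. The statement of (iii) says $x\in Z_i\cap X$, which makes the hypothesis $\dist(x,X)\leqslant\lambda$ vacuous; the way this lemma is later invoked in the proof of Lemma~\ref{Decompose} (applied to points $\gamma(T_k)$ that are merely $\lambda$-close to $X$, not in $X$) indicates the intended hypothesis is $x\in Z_i$ with $\dist(x,X)\leqslant\lambda$. This is consistent with the paper's own proof of (iii), which applies (ii) to the nearby point $y\in X$ rather than to $x$. Your proof of (iii) applies (ii) directly at $x$, which requires $x\in X$ and so matches the literal statement but not the intended use. The fix is painless in your framework: apply (ii) at $y$ instead (one has $y\in Z_i$, $\lambda\leqslant\dist(y,\partial Z_i)\leqslant3\lambda$), obtain $j\neq i$ with $y\in Z_j$ and $\dist(y,\partial Z_j)\geqslant\alpha=5\lambda$, then conclude $\dist(x,\partial Z_j)\geqslant4\lambda>2\lambda$ and $x\in Z_j$ by the same ball-connectedness argument applied to $B(y,\dist(y,\partial Z_j))$.
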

\begin{proof} 
(i) By contradiction, assume that for any integer $k>0$, there is an index $i_k\in\{1,\dots,p\}$ and a point $x^k\in Z_{i_k}\cap X$ such that $Z_{i_k}$ is the unique open set in $\mathcal Z$ containing $x^k$ and $\dist(x^k,\partial Z_{i_k})\leqslant\frac{3}{k}$. 
Since $\mathcal Z$ is finite, by taking a subsequence if necessary, we can suppose that $i_k$ is fixed for all $k$, namely, $i_k=i\in\{1,\dots,p\}.$ 
By the compactness of $X$, we can assume that the sequence $x^k$ converges to a limit $x^0\in X$. 
Clearly $x^0\in\partial Z_{i}\cap X,$ in particular, $x^0\not\in Z_i$.
Furthermore, $x^0\not\in Z_j$ for all $j\ne i$ since otherwise, $x^k\in Z_j$ for $k$ large enough which contradicts the fact that $Z_i$ is the unique open set in $\mathcal Z$ containing $x^k$. 
Consequently, $\displaystyle x^0\notin\cup_{j=1,\dots,p}Z_j$. So $x^0\notin X$, which is a contradiction.

(ii) Let $\lambda$ be given by item (i). We will show that the statement holds by shrinking $\lambda.$ 
For contradiction, assume that for any integer $k>0$ such that $\frac{1}{k}<\lambda$, there is an index $i_k\in\{1,\dots,p\}$ and a point $x^k\in Z_{i_k}\cap X$ such that 
\begin{itemize}
\item $\dist(x^k,\partial Z_{i_k})\leqslant \frac{3}{k}$; and 
\item for each $j\in\{1,\dots,p\}\setminus\{i\}$, either $x^k\notin Z_{j}$ or $x^k\in Z_j$ with $\dist(x^k,\partial Z_j)\leqslant\frac{3}{k}$. 
\end{itemize}
Since the cover is finite, by taking a subsequence if necessary, we may suppose that $i_k$ is fixed for all $k$, namely, $i_k=i\in\{1,\dots,p\};$ 
moreover, for each $j\in\{1,\dots,p\}\setminus\{i\}$, either $x^k\notin Z_{j}$ for all $k$ or $x^k\in Z_j$ and $\dist(x^k,\partial Z_j)\leqslant\frac{3}{k}$ for all $k$.
As $X$ is compact, we may assume that the sequence $x^k$ converges to a limit $x^0\in X$. 
Clearly $x^0\in\partial Z_{i}\cap X.$ 
For each $j\in\{1,\dots,p\}\setminus\{i\}$, by construction, either $x^0\notin\overline Z_j$ or $x^0\in\overline Z_j$ with $\dist(x^0,\partial Z_j)=0$, in particular, $x^0\not\in Z_j$. 
Therefore $\displaystyle x^0\notin\cup_{j=1,\dots,p}Z_j$. Consequently $x^0\notin X$, which is a contradiction.

(iii) Let $\lambda$ be given by (i) and (ii). 
By assumption, $\mathbb B^n_{2\lambda}(x)\subset Z_i.$
Let $y\in X$ be such that $\|x-y\|\leqslant\lambda$. 
Then 
$y\in Z_i$. 
On the other hand,
$$\dist(y,\partial Z_i)\leqslant\|x-y\|+\dist(x,\partial Z_i)\leqslant 3\lambda.$$
By item (ii), there is $j\in\{1,\dots,p\}\setminus\{i\}$ such that $y\in Z_j$ and 
$$\dist(y,\partial Z_j)>3\lambda.$$
Therefore $\mathbb B^n_{3\lambda}(y)\subset Z_j$ and so $x\in Z_j.$
In addition,
$$\dist(x,\partial Z_j)\geqslant\dist(y,\partial Z_j)-\|x-y\|>2\lambda.$$
This ends the proof of the lemma.
\end{proof}

\begin{lemma}\label{Decompose} Let $X\subset\mathbb R^n$ be a compact set and let $\mathcal Z:=\{Z_i:\ i=1,\dots,p\}$ be a distinct finite family of open balls covering  $X$. 
Assume that, for each $i$, there is a constant $\rho_i>0$, an open set $W_i\supset Z_i$ and a bi-Lipschitz homeomorphism $\eta_i\colon W_i\to\eta_i(W_i)\subset\mathbb R^n$ such that 
$$\eta_i(W_i)=\{u\in\mathbb R^n:\ |u_j|<\rho_i,\ j=1,\dots,n\}\ \text{ and }\ \eta_i(W_i\cap X)\subset\{u\in\mathbb R^n:\ u_2=0\}.$$
Let $\lambda>0$ be the constant depending on $\mathcal Z$ given by Lemma~\ref{Cover} and $L\geqslant 1$ be a common Lipschitz constant of $\eta_1,\dots,\eta_p,\eta_1^{-1},\dots,\eta_p^{-1}.$ 
Let $\gamma\colon[a,b]\to\mathbb R^n$ be a continuous piecewise linear curve such that $\dist(\gamma(t),X)\leqslant\frac{\lambda}{2L^2}$ for $t\in[a,b].$
Then there exist finite sequences $a=:T_0<\cdots<T_q:=b$ and $i_0\ne i_1\ne\cdots\ne i_{q-1}$ with $i_k\in\{1,\dots,p\}\ (k=0,\dots,q-1)$ such that:
\begin{enumerate}[{\rm (i)}]
\item $\gamma[T_{k-1},T_{k}]\subset W_{i_{k-1}}$ for $k=1,\dots,q$; and
\item $\eta_{i_k}^{-1}(w^k)\in W_{i_{k-1}}\cap W_{i_{k}}$ for $k=1,\dots,q-1,$ where 
$$z^k:=\eta_{i_k}(\gamma(T_k))\ \text{ and }\ w^k:=(z^k_1,w^k_2,z^k_3,\dots,z^k_n)$$
with
$$w_2^k=\left\{\begin{array}{llll}
-z_2^k             &\text{if}& z_2^k\ne 0\\
-\frac{\lambda}{L} &\text{if}& z_2^k=   0.
\end{array}\right.$$
\end{enumerate}
\end{lemma}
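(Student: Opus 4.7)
The plan is to partition $[a,b]$ finely enough that on each subinterval the curve $\gamma$ stays comfortably inside a single chart $W_{i_{k-1}}$, and then to verify at each transition that the reflected point $\eta_{i_k}^{-1}(w^k)$ is a perturbation of $\gamma(T_k)$ of size at most $\lambda$, so that it also lies in the preceding chart. The two available tools are Lemma~\ref{Cover}, which locates a chart in whose interior any given point of $X$ sits deeply, and the uniform $L$-bi-Lipschitz control of the $\eta_i$.

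I would first set $\delta:=\lambda/(2L^2)$ and, shrinking $\lambda$ once if necessary (this only strengthens the conclusions of Lemma~\ref{Cover}), arrange $\lambda/L<\rho_i$ for every $i$, so that the reflected second coordinate automatically stays in every image cube. By uniform continuity of $\gamma$ I would then choose $a=T_0<\cdots<T_N=b$ with $\text{diam}(\gamma([T_{k-1},T_k]))\le\delta$, pick $s_{k-1}\in[T_{k-1},T_k]$ together with a nearest point $y_{k-1}\in X$ to $\gamma(s_{k-1})$, and invoke Lemma~\ref{Cover}(i)--(ii) to select an index $i_{k-1}$ with $\dist(y_{k-1},\partial Z_{i_{k-1}})>3\lambda$; equivalently, $\mathbb B^n_{3\lambda}(y_{k-1})\subset W_{i_{k-1}}$. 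Condition~(i) then follows from $\|\gamma(t)-y_{k-1}\|\le 2\delta\le\lambda<3\lambda$ on $[T_{k-1},T_k]$, and merging consecutive subintervals carrying the same chosen index produces the desired pattern $i_0\ne i_1\ne\cdots\ne i_{q-1}$ without affecting (i).

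For~(ii) at a transition $T_k$, since $\eta_{i_k}$ is $L$-Lipschitz and sends $X\cap W_{i_k}$ into $\{u_2=0\}$, the second coordinate satisfies $|z^k_2|\le L\cdot\dist(\gamma(T_k),X)\le L\delta=\lambda/(2L)$. A direct case check on the definition of $w^k$ yields $|w^k_2-z^k_2|\le\lambda/L$ in both alternatives, and applying the $L$-Lipschitz map $\eta_{i_k}^{-1}$ then gives $\|\eta_{i_k}^{-1}(w^k)-\gamma(T_k)\|\le\lambda$; the preliminary choice $\lambda/L<\rho_{i_k}$ keeps $w^k$ inside the image cube, so $\eta_{i_k}^{-1}(w^k)\in W_{i_k}$ is automatic. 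Combining with $\|\gamma(T_k)-y_{k-1}\|\le 2\delta\le\lambda$ produces $\|\eta_{i_k}^{-1}(w^k)-y_{k-1}\|\le 2\lambda<3\lambda$, placing the reflected point in $\mathbb B^n_{3\lambda}(y_{k-1})\subset W_{i_{k-1}}$.

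The argument is not conceptually deep: its geometric content is entirely carried by Lemma~\ref{Cover}(ii), which guarantees that an alternative chart is always available at points of $X$ near the boundary of the current chart. The only delicate step is bookkeeping the constants $\lambda,\delta,L$ and the $\rho_i$ so that a single chain of ``$\le$'' and ``$<$'' inequalities threads through all three desired conclusions simultaneously, which amounts to shrinking $\lambda$ once at the outset.
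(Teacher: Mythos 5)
Your proposal is correct, and it takes a genuinely different route from the paper. The paper constructs the partition $T_0 < T_1 < \cdots < T_q$ by an ``exit-time'' induction: starting in a ball $Z_{i_0}$, it defines $T_1$ as the last time before the curve gets within $2\lambda$ of $\partial Z_{i_0}$, then invokes Lemma~\ref{Cover}(iii) to certify a new chart $Z_{i_1}$ at that exit point, and iterates. The termination of this process is then argued via the piecewise-linearity hypothesis (a linear segment meets a sphere finitely often), and the key estimate for part (ii) is that $\gamma(T_k)$ sits at distance exactly $2\lambda$ from $\partial Z_{i_{k-1}}$ and more than $2\lambda$ from $\partial Z_{i_k}$, so that a perturbation of size $\lambda$ stays inside both. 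You instead lay down a uniform partition with $\mathrm{diam}\,\gamma([T_{k-1},T_k])\leqslant\delta=\lambda/(2L^2)$, select a chart for each piece via Lemma~\ref{Cover}(i)--(ii) applied to a nearest $X$-point, and merge runs of equal indices. This avoids Lemma~\ref{Cover}(iii) altogether, and finiteness is automatic, so your argument does not actually use the piecewise-linearity of $\gamma$ (only continuity). That is a simplification.

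Two small points worth noting. First, you shrink $\lambda$ so that $\lambda/L<\rho_i$; this is needed to ensure $w^k$ lies in the cube $\eta_{i_k}(W_{i_k})$ when $z^k_2=0$ (so that $\eta_{i_k}^{-1}(w^k)$ is even defined). The paper's proof actually needs this too but leaves it implicit, so making it explicit is an improvement; and shrinking $\lambda$ is harmless for Lemma~\ref{Cover} and for the downstream use in Lemma~\ref{key}, where the tolerance $\nu=\lambda/(2L^2)$ is fixed after $\lambda$. Second, your bound $|z^k_2|\leqslant L\,\dist(\gamma(T_k),X)$ tacitly requires the nearest $X$-point to $\gamma(T_k)$ to lie in $W_{i_k}$ (otherwise you cannot invoke $\eta_{i_k}(W_{i_k}\cap X)\subset\{u_2=0\}$). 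This is true but should be said: if $y'$ is the nearest-point witness of the original subinterval starting at $T_k$, then $\|y^*-y'\|\leqslant 3\delta<3\lambda$ so $y^*\in\mathbb B^n_{3\lambda}(y')\subset Z_{i_k}$. With that one line added, the proof is complete and arguably cleaner than the paper's.
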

\begin{proof}
The construction of the desired sequence is done by induction as follow.

\subsubsection*{Step 1: $k=0$.} Let $y^0\in X$ be such that $\|y^0-\gamma(T_0)\|=\dist(\gamma(T_0),X)<\lambda.$ 
In view of Lemma~\ref{Cover}(i)-(ii), there is $i_0\in\{1,\dots,p\}$ such that $y^0\in Z_{i_0}$ and $\dist(y^0,\partial Z_{i_0})>3\lambda.$ 
So 
$$\dist(\gamma(T_0),\partial Z_{i_0})\geqslant\dist(y^0,\partial Z_{i_0})-\|y^0-\gamma(T_0)\|>2\lambda.$$
Hence $\gamma(T_0)\in Z_{i_0}.$
Set $$S_1:=\sup\{t\in [T_0,b]:\ \gamma(s)\in Z_{i_0}\ \text{for all}\ s\in[T_0,t]\}>T_0.$$
If $S_1=b$, then set $T_1:=b$ and we are done. 
Otherwise, we have $\gamma(S_1)\in\partial Z_{i_0}$. 
So if we let
$$T_1:=\sup\{t\in [T_0,b]:\ \dist(\gamma(s),\partial Z_{i_0})\geqslant 2\lambda\ \text{for all}\ s\in[T_0,t]\},$$
then clearly  
$$T_0<T_1<b,\ \ \gamma[T_{0},T_{1}]\subset W_{i_{0}},\ \text{ and }\ \dist(\gamma(T_1),\partial Z_{i_0})= 2\lambda.$$ 
By Lemma~\ref{Cover}(iii), there is $i_1\ne i_0$ such that $\gamma(T_1)\in Z_{i_1}$ and $\dist(\gamma(T_1),\partial Z_{i_1})> 2\lambda.$ 

\subsubsection*{Step 2: Induction.} For $k\geqslant 0$, assume that we have constructed sequences $a=:T_0<\dots<T_k$, $i_0\ne i_1\ne\cdots\ne i_{k}$ and $\{y^0,\dots,y^{k}\}$ such that, for $l=1,\dots,k$, we have:
\begin{enumerate}[{\rm (a)}]
\item $T_{l}:=\sup\{t\in [T_{l-1},b]:\ \dist(\gamma(s),\partial Z_{i_{l-1}})\geqslant 2\lambda$ for all $s\in[T_{l-1},t]\}$;
\item $\dist(\gamma(T_{l}),\partial Z_{i_{l-1}})= 2\lambda $ and $\dist(\gamma(T_{l}),\partial Z_{i_{l}})> 2\lambda$;
\item $\gamma[T_{l-1},T_{l}]\subset W_{i_{l-1}}$; 
\item $\|y^{l-1}-\gamma(T_{l-1})\|=\dist(\gamma(T_{l-1}),X)$ and  $\|y^k-\gamma(T_k)\|=\dist(\gamma(T_k),X)$.
\end{enumerate}

Set $$S_{k+1}:=\sup\{t\in [T_k,b]:\ \gamma(s)\in Z_{i_k}\ \text{for all}\ s\in[T_k,t]\}>T_k.$$
If $S_{k+1}=1$, set $q:=k+1$ and $T_{k+1}:=b$. 
Then item (i) follows.
Contrarily, we have $\gamma(S_{k+1})\in\partial Z_{i_k}$.
Let
$$T_{k+1}:=\sup\{t\in [T_k,b]:\ \dist(\gamma(s),\partial Z_{i_k})\leqslant 2\lambda\ \text{for all}\ s\in[T_k,t]\},$$
then clearly $T_k<T_{k+1}<b$ and $\dist(\gamma(T_{k+1}),\partial Z_{i_k})= 2\lambda$. 
By Lemma~\ref{Cover}(iii), there is $i_{k+1}\ne i_k$ such that $\gamma(T_{k+1})\in Z_{i_{k+1}}$ and $\dist(\gamma(T_{k+1}),\partial Z_{i_{k+1}})> 2\lambda.$ 
Hence the process can be repeated with $k$ replaced by $k+1.$

Observe that the sequence $a=:T_0<T_1<\cdots$ is finite so there must be $q>0$ such that $T_q=b.$ 
Indeed, suppose for contradiction that the sequence $a=:T_0<T_1<\cdots$ is infinite. 
Then there exists an index $k$ that appears infinitely many times in the sequence $i_0\ne i_1\ne\cdots$.
This implies that, in view of item (b), $\gamma$ cuts the sphere $\{x\in Z_k:\ \dist(z,\partial Z_k)=2\lambda\}$ infinitely many times which is a contradiction since a piecewise linear curve can meets a sphere at finitely many times. 
Consequently, (i) follows immediately.

Now we show that $\eta_{i_k}^{-1}(w^k)\in W_{i_{k-1}}\cap W_{i_{k}}$ for $k=1,\dots,q-1.$
If $z_2^k=0,$ since $\eta^{-1}_{i_k}$ is Lipschitz with the constant $L$, then 
\begin{equation}\label{zw1}\|\eta_{i_k}^{-1}(z^k)-\eta_{i_k}^{-1}(w^k)\|\leqslant L\|z^k-w^k\|=L\|z_2^k-w_2^k\|=\lambda.\end{equation}
Otherwise, by item (d) and by the assumption $\dist(\gamma(t),X)\leqslant\frac{\lambda}{2L^2}$ for $t\in[a,b],$ we have $\|y^k-\gamma(T_k)\|<\frac{\lambda}{2L^2}.$
Hence $\|\eta_{i_k}(y^k)-z^k\|<\frac{\lambda}{2L},$ and so
\begin{equation}\label{zw2}\begin{array}{lll}
\|\eta_{i_k}^{-1}(z^k)-\eta_{i_k}^{-1}(w^k)\|&\leqslant&\|\eta_{i_k}^{-1}(z^k)-\eta_{i_k}^{-1}(\eta_{i_k}(y^k))\|+\|\eta_{i_k}^{-1}(\eta_{i_k}(y^k))-\eta_{i_k}^{-1}(w^k)\|\\
&\leqslant&\|y^k-\gamma(T_k)\|+L\|\eta_{i_k}(y^k)-w^k\|\\
&=&\|y^k-\gamma(T_k)\|+L\|\eta_{i_k}(y^k)-z^k\|\\
&\leqslant&(1+L^2)\|y^k-\gamma(T_k)\|
<\displaystyle (1+L^2)\frac{\lambda}{2L^2}\leqslant\lambda.
\end{array}\end{equation}
Now~\eqref{zw1} and~\eqref{zw2}, together with the facts $\dist(\gamma(T_{k}),\partial Z_{i_{k-1}})= 2\lambda$, give 
$$\begin{array}{lll}
\dist(\eta_{i_k}^{-1}(w^k),\partial Z_{i_{k-1}})&\geqslant&\dist(\eta_{i_k}^{-1}(z^k),\partial Z_{i_{k-1}})-\|\eta_{i_k}^{-1}(z^k)-\eta_{i_k}^{-1}(w^k)\|\\
&=&\dist(\gamma(T_k),\partial Z_{i_{k-1}})-\|\eta_{i_k}^{-1}(z^k)-\eta_{i_k}^{-1}(w^k)\|>\lambda.
\end{array}$$
Similarly,~\eqref{zw1},~\eqref{zw2} and $\dist(\gamma(T_k),\partial Z_{i_k})> 2\lambda$ imply $\dist(\eta_{i_k}^{-1}(w^k),\partial Z_{i_{k}})>\lambda.$ 
Therefore $\eta_{i_k}^{-1}(w^k)\in W_{i_{k-1}}\cap W_{i_{k}}$ for $k=1,\dots,q-1.$ 
So~(ii) follows and the lemma is proved.
\end{proof}

We need the following variant of the constant rank theorem for locally Lipschitz mappings.
\begin{lemma}\label{CR} Let $F\colon\mathbb R^n\to\mathbb R^m$ be a locally Lipschitz mapping with $n\geqslant m$. 
Assume that each element of $\partial F(x)$ has rank $m$ for any $x$ in a neighborhood of $x^0\in\mathbb R^n$. 
Then there is an open neighborhood $Z$ of $x^0$ in $\mathbb R^n$ and a bi-Lipschitz homeomorphism $\eta\colon Z\to\eta(Z)\subset\mathbb R^n$ such that 
$$F\circ\eta^{-1}(u_1,\dots,u_n)=(u_1,\dots,u_m)+F(x^0),$$
for all $(u_1,\dots,u_n)\in\eta(Z).$
\end{lemma}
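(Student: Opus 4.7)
The plan is to reduce the lemma to Clarke's inverse function theorem for locally Lipschitz maps by augmenting $F$ to a self-map of $\mathbb{R}^n$. After translating, we may assume $x^0 = 0$ and $F(x^0) = 0$. The key step will be to choose a surjective linear map $L\colon \mathbb{R}^n \to \mathbb{R}^{n-m}$ such that for every $A \in \partial F(0)$ the $n \times n$ block matrix $\binom{A}{L}$ is nonsingular, equivalently $\ker A \cap \ker L = \{0\}$. Given such an $L$, define $\eta(x) := (F(x), L(x))$. Since $L$ is linear, a direct calculation with the definition of the Clarke subdifferential gives $\partial \eta(0) = \{\binom{A}{L} : A \in \partial F(0)\}$, which consists of invertible matrices by construction. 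Clarke's inverse function theorem (see \cite{Clarke1990}) then supplies an open neighborhood $Z$ of $0$ on which $\eta$ is a bi-Lipschitz homeomorphism onto $\eta(Z)$. From $\eta(x) = u$ one reads off $F(x) = (u_1,\dots,u_m)$, hence $F \circ \eta^{-1}(u) = (u_1,\dots,u_m)$; restoring the translation yields the formula asserted in the lemma.

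The crux is producing the linear map $L$. For $m = 1$---which is all that is needed for the application in Lemma~\ref{Decompose}---the argument is immediate: $\partial F(0)$ is a compact convex subset of $\mathbb{R}^n \setminus \{0\}$, so by Hahn--Banach separation there exists $v^* \in \mathbb{R}^n$ with $\langle w, v^* \rangle > 0$ for every $w \in \partial F(0)$. Any surjective $L\colon \mathbb{R}^n \to \mathbb{R}^{n-1}$ with $\ker L = \mathbb{R}v^*$ then works, because the row span $(v^*)^\perp$ of $L$ contains no element of $\partial F(0)$, rendering $\binom{w}{L}$ nonsingular for every $w \in \partial F(0)$.

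For general $m \geq 2$, the construction of $L$ is the main obstacle. A natural approach is via a genericity argument: for each fixed $A$ of rank $m$, the set of full-rank maps $L$ with $\det\binom{A}{L} = 0$ is a proper closed algebraic subvariety in the space of rank-$(n-m)$ linear maps (the zero locus of the polynomial $L \mapsto \det\binom{A}{L}$). Using the compactness of $\partial F(0)$ together with the full-row-rank assumption on all its elements, one argues that the union of these subvarieties as $A$ ranges over the compact convex set $\partial F(0)$ does not exhaust the space of rank-$(n-m)$ maps, so that a common $L$ in the complement exists; this geometric/transversality step is the delicate part of the proof. Once $L$ is secured, the remainder is exactly the first paragraph: $\eta$ is locally Lipschitz, $\partial \eta(0)$ consists of invertible matrices, and Clarke's inverse function theorem produces the desired bi-Lipschitz homeomorphism $\eta \colon Z \to \eta(Z)$ satisfying the required identity.
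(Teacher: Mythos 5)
The paper disposes of this lemma in one line, by citing the proof of Theorem~3.1 of Butler--Timourian--Viger \cite{Butler1988}. Your approach is self-contained and different: augment $F$ by a surjective linear map $L\colon\mathbb R^n\to\mathbb R^{n-m}$ to obtain $\eta=(F,L)$, verify that $\partial\eta(x^0)=\{\binom{A}{L}:A\in\partial F(x^0)\}$ consists of invertible matrices, and invoke Clarke's Lipschitz inverse function theorem. That reduction is sound, and your treatment of the case $m=1$ is complete: since $\partial F(x^0)$ is compact, convex and avoids $0$, Hahn--Banach separation produces $v^*$ with $\langle w,v^*\rangle>0$ for all $w\in\partial F(x^0)$, and any $L$ with $\ker L=\mathbb Rv^*$ does the job.

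There are, however, two problems. First, the claim that ``$m=1$ is all that is needed for the application in Lemma~\ref{Decompose}'' is wrong. Lemma~\ref{CR} is not invoked inside the proof of Lemma~\ref{Decompose}; Lemma~\ref{Decompose} merely assumes the charts $\eta_i$ as a hypothesis. The lemma is actually applied in the proof of Lemma~\ref{key}, to the map $F(x)=(f(x),\|x\|^2)\colon\mathbb R^n\to\mathbb R^2$, so the case needed is $m=2$. Second, and more seriously, for $m\geqslant 2$ the crucial step---choosing a single $L$ so that $\binom{A}{L}$ is nonsingular simultaneously for every $A$ in the compact convex set $\partial F(x^0)$---is only asserted. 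The genericity heuristic you sketch does not settle it: for each fixed $A$, the bad locus $\{L:\det\binom{A}{L}=0\}$ is indeed a codimension-one subvariety of $\mathrm{Gr}(m,n)$ (identifying $L$ with $\ker L$), but its union over the $mn$-dimensional parameter set $\partial F(x^0)$ can a priori be of dimension $\geqslant\dim\mathrm{Gr}(m,n)$, so a crude dimension count leaves open the possibility that the bad set exhausts the Grassmannian. Some genuine structural use of the convexity of $\partial F(x^0)$, beyond compactness, is required here, and you do not supply it; as written, this is a gap for precisely the case ($m=2$) that the paper relies on.

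It is worth noting that for the particular $F=(f,\|\cdot\|^2)$ used in Lemma~\ref{key}, the common-complement step can actually be reduced to your $m=1$ separation argument: every $w\in\partial F(x)$ has second row equal to $2x$, and the rank-two hypothesis forces the orthogonal projections $P_{x^\perp}v$, $v\in\partial f(x)$, to form a compact convex subset of $x^\perp$ avoiding $0$; separating in $x^\perp$ yields $z\perp x$ with $\langle v,z\rangle>0$ for all $v\in\partial f(x)$, and $W=\mathrm{span}\{x,u\}$ with a suitable $u$ then works. But your proposal argues for arbitrary $F$ with $m\geqslant 2$, and at that level of generality the existence of $L$ is left unjustified. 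Either restrict to the structure actually present in the paper (as above) or supply the missing construction of $L$---or simply cite \cite{Butler1988} as the paper does.
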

\begin{proof} The proof follows directly from the proof of~\cite[Theorem 3.1]{Butler1988}.
\end{proof}

\begin{proof} [Proof of Theorem~\ref{MountainPass}]

Recall that $\mathcal A$ is the set given by~\eqref{A}. 
For $r\geqslant\max\{\|x^*\|,\|y^*\|\}$ and $\epsilon>0,$ set 
\begin{equation}\label{Are}\mathcal A(r,\epsilon):=\left\{\gamma\in \mathcal A:\ \max_{t\in[0,1]}f(\gamma(t))< c+\epsilon\ \text{ and } \ \max_{t\in[0,1]}\|\gamma(t)\|\leqslant r\right\}.\end{equation}
By definition, for each $\epsilon>0$, there exists $r\gg 1$ such that $\mathcal A(r,\epsilon)$ is non-empty so the function
\begin{equation}\label{Re}
(0,+\infty)\to(0,+\infty),\ \ \epsilon\mapsto R(\epsilon):=\inf\{r:\ \mathcal A(r,\epsilon)\ne\emptyset\}
\end{equation}
is well-defined and moreover, it is decreasing. 
In particular, there exists the limit
$$\lim_{\epsilon\to 0^+} R(\epsilon) \in \mathbb{R} \cup \{+\infty\}.$$ 
Now Theorem~\ref{MountainPass} is a direct consequence of Propositions~\ref{Prop1} and~\ref{Prop2} below. 
\end{proof}

\begin{proposition}\label{Prop1} 
If $\lim_{\epsilon\to 0^+} R(\epsilon)  < +\infty$ then $c\in K_0(f)$.
\end{proposition}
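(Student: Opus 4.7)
The plan is to argue by contradiction via the classical pseudo-gradient deformation, adapted to locally Lipschitz functions through Lemma~\ref{Chang}. Assume $c \notin K_0(f)$. The mountain-pass geometry forces $c \geq \inf_{\partial \mathcal U} f > \max\{f(x^*), f(y^*)\}$, so I fix $\delta_0 > 0$ with $c - 2\delta_0 > \max\{f(x^*), f(y^*)\}$ and set $R_0 := \lim_{\epsilon \to 0^+} R(\epsilon) < +\infty$. By hypothesis, $0 \notin \partial f(x)$ for every $x \in f^{-1}(c)$. Upper semicontinuity of $\partial f$ (Lemma~\ref{Clarke}(ii)) combined with compactness then yields, for some small $\delta \in (0, \delta_0)$ and some $b > 0$, an open neighborhood $U$ of the slab
\[
K := \{x \in \mathbb R^n : \|x\| \leq R_0 + 2, \ |f(x) - c| \leq \delta\}
\]
on which $\inf_{w \in \partial f(x)} \|w\| \geq 2b$. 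Applying Lemma~\ref{Chang} on $U$ produces a locally Lipschitz vector field $\vv$ with $\|\vv\| < 1$ and $\langle w, \vv(x)\rangle > b$ for every $w \in \partial f(x)$.

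Next, I multiply $\vv$ by a locally Lipschitz cut-off $\chi(x) := \chi_1(f(x)) \chi_2(\|x\|)$, where $\chi_1$ equals $1$ on $[c - \delta/2, c + \delta/2]$ and vanishes outside $[c - \delta, c + \delta]$, and $\chi_2$ equals $1$ on $[0, R_0+1]$ and vanishes outside $[0, R_0+2]$; then extend $\chi \vv$ by zero outside $U$. The resulting vector field is globally defined, bounded, and locally Lipschitz, so the flow $\eta_s$ of $-\chi \vv$ exists for every $s \geq 0$, propagates points at speed strictly less than $1$, and, by Clarke's chain rule together with Lemma~\ref{Clarke}(iii), satisfies
\[
\frac{d}{ds} f(\eta_s(x)) \;\leq\; -\,b\,\chi(\eta_s(x)) \qquad \text{for a.e.\ } s \geq 0.
\]
Thus $f$ never increases along an orbit, and decreases at rate at least $b$ whenever $\chi = 1$. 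I arrange $\delta$ so that $s_0 := \delta/b < 1/2$, pick $\epsilon \in (0, \delta/4)$ small enough that $R(\epsilon) < R_0 + 1/2$, choose $\gamma \in \mathcal A(R(\epsilon), \epsilon)$, and define the deformed path $\widetilde \gamma(t) := \eta_{s_0}(\gamma(t))$. Since $\chi$ vanishes near $x^*$ and $y^*$ (which both lie in $\{f < c - \delta_0\}$), $\widetilde \gamma$ has the same endpoints as $\gamma$ and belongs to $\mathcal A$. A case analysis on whether $f(\gamma(t)) \leq c - \delta/2$ or $f(\gamma(t)) \in (c - \delta/2, c + \epsilon)$, combined with the descent estimate, then gives $f(\widetilde\gamma(t)) \leq c - \delta/2 < c$ for every $t$, contradicting the definition of $c$ as an infimum.

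The main obstacle is the compatibility of the quantitative choices in the deformation step: each orbit starting on $\gamma$ must simultaneously stay inside $\{\|x\| \leq R_0 + 1\}$ (so $\chi_2 \equiv 1$ along it), remain in the slab $\{c - \delta/2 \leq f \leq c + \delta/2\}$ long enough for $f$ to drop by more than $\epsilon$, and avoid re-entering $\{f > c - \delta/2\}$ once it has left. The last is automatic from the monotonicity $\tfrac{d}{ds}(f\circ\eta_s) \leq 0$; the first follows from the speed bound $\|\chi\vv\| < 1$ together with $R(\epsilon) + s_0 < R_0 + 1$, secured by picking $\epsilon$ small; the second follows from the rate estimate $b s_0 = \delta > 2\epsilon$. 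Compatibility of these three constraints is precisely where the hypothesis $R_0 < +\infty$ is used, since otherwise no single bounded neighborhood $U$ could carry a pseudo-gradient that simultaneously pushes every nearly-minimizing path down.
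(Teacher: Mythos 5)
Your proof is correct in substance and follows the same overall strategy as the paper: argue by contradiction, build a pseudo-gradient field via Lemma~\ref{Chang} on a neighborhood of the relevant portion of the level set, truncate it to a global locally Lipschitz field, and flow a nearly-optimal path down below level $c$. The technical choices differ in three places, and each is a legitimate simplification. First, where the paper introduces the Kuratowski upper limit $D$ of the high-level sets $D_k$ of the paths $\gamma^k$, you work with the compact slab $K=\{\|x\|\leqslant R_0+2,\ |f(x)-c|\leqslant\delta\}$; this is coarser but still noncritical for small $\delta$ by the same compactness-plus-upper-semicontinuity argument, and it avoids having to prove that $D_k$ eventually enters a prescribed neighborhood of $D$. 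Second, in place of the Urysohn cut-off $\varphi$ you take the explicit product $\chi_1(f(x))\chi_2(\|x\|)$, which automatically localizes the deformation to the slab and to a bounded region; this is cleaner and makes the "radius bookkeeping'' transparent. Third, the paper proves the descent estimate directly, using Lemma~\ref{direction} to get a uniform bound on discrete difference quotients and then the computation in Lemma~\ref{decreasing}; you instead invoke the a.e.\ inequality $\frac{d}{ds}f(\eta_s(x))\leqslant f^\circ(\eta_s(x);\dot\eta_s(x))$ (Clarke's chain rule), which the paper does not state as a lemma but is standard and gives the same conclusion with less bookkeeping. One small repair: $\mathcal A(R(\epsilon),\epsilon)$ may be empty because $R(\epsilon)$ is an infimum, and moreover since $R(\cdot)$ is nonincreasing in $\epsilon$ one has $R(\epsilon)\geqslant R_0$, not $<R_0$. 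You should pick $\gamma\in\mathcal A(r,\epsilon)$ for some $r$ strictly between $R(\epsilon)$ and $R_0+1/2$ (available for $\epsilon$ small since $R(\epsilon)\to R_0$), which leaves the rest of the argument unchanged.
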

We need some preparation before giving the proof of Proposition~\ref{Prop1}.
As $R(\epsilon)$ is decreasing, there is a constant $R_0>0$ such that for all $\epsilon>0$, we have $R(\epsilon)<R_0$ and so $\mathcal A(R_0,\epsilon)\ne\emptyset$. 
For each integer $k>0$, take $\gamma^k\in\mathcal A\left(R_0,\frac{1}{k}\right)$ and let $D$ be the superior Kuratowski limit of the sequence of non-empty compact sets 
$$D_k:=\{\gamma^k(t):\ t\in[0,1] \text{ and } f(\gamma^k(t))\geqslant c\}.$$
Namely, $x\in D$ if and only if there is a sequence $x^{k_l}\in D_{k_l}$ such that $x^{k_l}\to x$ as $l \to +\infty.$
It is clear that $D$ is a non-empty compact set and $f(x)=c$ for any $x\in D.$
To prove that $c\in K_0(f)$, it is enough to show that there is $x\in D$ such that $0\in\partial f(x).$
Assume for contradiction that $0\not\in\partial f(x)$ for all $x\in D.$
By the compactness of the Clarke subdifferential (Lemma~\ref{Clarke}(i)), it is not hard to see that for each $x\in D,$ there is a constant $b_x>0$ such that 
$$\inf_{w\in\partial f(x)}\|w\|>4b_x.$$ 
By Lemma~\ref{Clarke}(ii), there exists a bounded open neighborhood $U_x$ of $x$ such that 
$$\inf_{w\in\partial f(y)}\|w\|>2b_x\ \text{ for all }\ y\in U_x.$$
By assumption, we have 
$$f(x^*),f(y^*)<\inf_{x\in\partial \mathcal U} f(x)\leqslant c.$$
Thus $x^*,y^*\not\in D$ and so, we can shrink $U_x$ so that $x^*,y^*\not\in U_x.$

As $D$ is compact and $\{U_x:\ x\in D\}$ is an open cover of $D$, there exists a finite open cover of $D$: 
$$\{U_{x^i}:\ x^i\in D,\ i=1,\dots,p\}.$$ 
Let 
$$\displaystyle b:=\min_{i=1,\dots,p}b_{x^i}>0\ \text{ and }\ \displaystyle U:=\bigcup_{i=1}^p U_{x^i}.$$
In view of Lemma~\ref{Chang}, there exists a locally Lipschitz vector field $\vv(x)$ defined on $U$ such that 
\begin{equation}\label{v}\|\vv(x)\|<1\   \text{ and }\  \langle w,\vv(x) \rangle>b\ \text{for any}\ w\in\partial f(x).\end{equation}
Let $h_0:=h_0\left(\frac{b}{4}\right)$ be the constant determined by Lemma~\ref{direction}. So, for all $y\in N_{h_0}(D)$, $h\in (0,h_0]$ and $v\in\overline{\mathbb B}^n$, we have 
\begin{equation}\label{b/4}\frac{f(y+hv)-f(y)}{h}<f^\circ(x;v)+\frac{b}{4},\end{equation}
where $x\in D$ is a point such that $\dist(y,D)=\|y-x\|.$
Let $V\subset U$ be an open neighborhood of $D$ such that $\overline{V}\subset U\cap N_{h_0}(D).$ 
By a smooth version of Urysohn's lemma~\cite[Lemma~1.3.2]{Petersen}, there is a smooth function $\varphi\colon\mathbb R^n\to [0,1]$ such that:
\begin{equation}\label{varphi}
\varphi(\mathbb R^n\setminus U)=0\  \text{ and }\  \varphi(\overline{V})=1.
\end{equation}
Let 
$$\widetilde \vv(x):=\varphi(x)\vv(x),$$
which is obviously a locally Lipschitz vector field on $\mathbb R^n$. 
This, together with the facts that $U$ is bounded and $\supp(\widetilde \vv)\subset U$, implies that the vector field $\widetilde \vv$ is Lipschitz on $\mathbb R^n$ with a constant $K>0$.
Moreover, in view of Lemma~\ref{Clarke}(i),~(ii), there is a constant $K'>0$ such that $\|w\|\leqslant K'$ for all $w\in \partial f(x)$ and $x\in \overline U.$
We need the following lemma.

\begin{lemma}\label{decreasing}
For any trajectory $\alpha\colon (t_1,t_2)\to \mathbb R^n$ of $-\widetilde \vv$, the function $f\circ\alpha$ is decreasing on $(t_1,t_2)$. 
In addition, for all $u\in (t_1,t_2)$ such that $\alpha(u)\in V$, we have
\begin{equation}\label{novarphi}f(\alpha(u+h))-f(\alpha(u))<-\frac{bh}{2} \ \text{ for }\ h\in\left(0,\min\left\{\frac{b}{2KK'},h_0\right\}\right],\end{equation}
where $h_0:	=h_0\left(\frac{b}{4}\right)$ is the constant determined by Lemma~\ref{direction}.
\end{lemma}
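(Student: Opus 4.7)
The plan is to address the two claims of the lemma with different techniques: an upper Dini derivative argument for the monotonicity, and a quantitative Taylor-type expansion combined with Lemma~\ref{direction} for the strict-drop estimate.

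First I would prove that $f\circ\alpha$ is non-increasing by showing that its upper right Dini derivative $D^+(f\circ\alpha)(t):=\limsup_{h\to 0^+}[f(\alpha(t+h))-f(\alpha(t))]/h$ is non-positive at every $t\in(t_1,t_2)$. Since $\widetilde\vv$ is Lipschitz with $\|\widetilde\vv\|<1$, the curve $\alpha$ is $1$-Lipschitz and $\alpha(t+h)=\alpha(t)-h\widetilde\vv(\alpha(t))+o(h)$. Splitting
$$\frac{f(\alpha(t+h))-f(\alpha(t))}{h}=\frac{f(\alpha(t+h))-f(\alpha(t)-h\widetilde\vv(\alpha(t)))}{h}+\frac{f(\alpha(t)-h\widetilde\vv(\alpha(t)))-f(\alpha(t))}{h},$$
the first piece is $o(1)$ by local Lipschitzness of $f$, while the $\limsup$ of the second is bounded above by $f^\circ(\alpha(t);-\widetilde\vv(\alpha(t)))$ by the very definition of $f^\circ$. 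For $\alpha(t)\in U$, Lemma~\ref{Clarke}(iii) together with~\eqref{v} give $f^\circ(\alpha(t);-\widetilde\vv(\alpha(t)))=-\varphi(\alpha(t))\min_{w\in\partial f(\alpha(t))}\langle w,\vv(\alpha(t))\rangle\le -\varphi(\alpha(t))b\le 0$; outside $U$ one has $\widetilde\vv(\alpha(t))=0$. Thus $D^+(f\circ\alpha)(t)\le 0$ everywhere, and the standard fact that a continuous function with non-positive upper Dini derivative is non-increasing delivers the first claim.

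For the quantitative bound, I would fix $u$ with $\alpha(u)\in V$, so that $\varphi(\alpha(u))=1$ and $\widetilde\vv(\alpha(u))=\vv(\alpha(u))$. Using $\|\dot\alpha\|<1$ and the Lipschitz constant $K$ of $\widetilde\vv$, the remainder $r(h):=\alpha(u+h)-\alpha(u)+h\vv(\alpha(u))=\int_u^{u+h}(\widetilde\vv(\alpha(u))-\widetilde\vv(\alpha(s)))\,ds$ satisfies $\|r(h)\|\le Kh^2/2$. I would decompose $f(\alpha(u+h))-f(\alpha(u))=\Delta_1+\Delta_2$ with $\Delta_1=f(\alpha(u+h))-f(\alpha(u)-h\vv(\alpha(u)))$ and $\Delta_2=f(\alpha(u)-h\vv(\alpha(u)))-f(\alpha(u))$. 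The Lebourg mean-value inequality with $\|\partial f\|\le K'$ on $\overline U$ yields $|\Delta_1|\le K'\|r(h)\|\le K'Kh^2/2$, which is at most $bh/4$ as soon as $h\le b/(2KK')$. For $\Delta_2$ I would apply Lemma~\ref{direction} with $y=\alpha(u)\in N_{h_0}(D)$, $v=-\vv(\alpha(u))\in\overline{\mathbb B}^n$ and $\epsilon=b/4$ to obtain $\Delta_2<h(f^\circ(x;-\vv(\alpha(u)))+b/4)$, where $x\in D$ is the closest point to $\alpha(u)$; combined with~\eqref{v} this gives $f^\circ(x;-\vv(\alpha(u)))<-b$, hence $\Delta_2<-3bh/4$. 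Summing produces $f(\alpha(u+h))-f(\alpha(u))<-3bh/4+bh/4=-bh/2$ on the prescribed range of $h$.

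The main obstacle is the last step, namely moving from the inequality of Lemma~\ref{direction} (which pins $f^\circ$ at the closest point $x\in D$ to $\alpha(u)$) to $f^\circ(x;-\vv(\alpha(u)))<-b$, since~\eqref{v} only guarantees $\langle w,\vv(x)\rangle>b$ at $x$ itself, with the direction $\vv(x)$ rather than $\vv(\alpha(u))$. The cleanest way around this is to apply Lemma~\ref{direction} with the compact set $\overline V$ in place of $D$: for $y=\alpha(u)\in\overline V$ the closest point of $\overline V$ to $y$ is $y$ itself, so the inequality involves $f^\circ(\alpha(u);-\vv(\alpha(u)))$, which is $<-b$ directly by~\eqref{v} applied at $\alpha(u)\in U$. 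Alternatively, one further shrinks $V$ so that the continuity of $\vv$ and the upper semi-continuity of $f^\circ$ transport the $D$-estimate back onto $\alpha(u)$.
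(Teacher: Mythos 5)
Your proof is correct and follows the same overall strategy as the paper: bound the difference quotient of $f\circ\alpha$ by the generalized directional derivative in the direction $-\vv(\alpha(u))$ plus an $O(h)$ remainder controlled by the Lipschitz constants $K$ and $K'$; your $\Delta_1+\Delta_2$ split is a repackaging of the paper's single inequality chain \eqref{long}. The substantive observation is the ``obstacle'' you flag at the end, which is in fact a genuine (if minor and fixable) slip in the paper's own argument. After invoking Lemma~\ref{direction} to replace $f^\circ(\alpha(u);\cdot)$ by $f^\circ(x;\cdot)$, with $x\in D$ the nearest point to $\alpha(u)$, the paper simply says to ``repeat the computation in \eqref{long}''; but the final step of that computation would then require $\langle w,\vv(\alpha(u))\rangle>b$ for all $w\in\partial f(x)$, whereas \eqref{v} only couples $\partial f$ and $\vv$ evaluated at the \emph{same} point, giving $\langle w,\vv(x)\rangle>b$ for $w\in\partial f(x)$ or $\langle w,\vv(\alpha(u))\rangle>b$ for $w\in\partial f(\alpha(u))$, but not the mixed statement needed. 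Your fix — applying Lemma~\ref{direction} with $\overline V$ in place of $D$, so that the nearest point of the reference compact set to $\alpha(u)\in V$ is $\alpha(u)$ itself and the whole estimate stays centered where \eqref{v} is directly applicable — is valid and closes the gap, at the harmless cost of using the constant $h_0$ associated to $\overline V$ rather than to $D$. Your alternative remedy (shrinking $V$ so that continuity of $\vv$ transports the estimate from $x$ to $\alpha(u)$) also works but yields a slightly weaker drop constant, which is still sufficient for the use made of this lemma in the proof of Proposition~\ref{Prop1}.
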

\begin{proof}
Take any $u\in (t_1,t_2)$. For all $s\in(t_1,t_2)$, we have
\begin{equation}\label{su}\displaystyle\|\alpha(s)-\alpha(u)\|=\left\|\int_u^{s} -\widetilde \vv(\alpha(s'))ds'\right\|\leqslant\left|\int_u^{s} \|\widetilde \vv(\alpha(s'))\|ds'\right|\leqslant\left| \int_u^{s}ds'\right|=|s-u|.\end{equation}
Observe that the first statement is clear if $\alpha(u)\not\in\supp~\widetilde \vv\subset U$ so assume that $\alpha(u)\in \supp~\widetilde \vv.$
Thus $\varphi(\alpha(u))>0$ and so $\alpha(u)\in U$. 
Let 
$$\epsilon:=\frac{\varphi(\alpha(u))}{4}b>0.$$ 
For all $h\in\Big(0,\frac{\varphi(\alpha(u))b}{2KK'}\Big]$ small enough, we have
\begin{equation}\label{long}\begin{array}{lll}
\displaystyle\frac{f(\alpha(u+h))-f(\alpha(u))}{h}&<&\displaystyle f^\circ\left[\alpha(u);\frac{\alpha(u+h)-\alpha(u)}{h}\right]+\epsilon\\
&=&\displaystyle\max_{w\in\partial f(\alpha(u))}\left\langle w, \frac{\alpha(u+h)-\alpha(u)}{h}\right\rangle+\epsilon\\
&=&\displaystyle\max_{w\in\partial f(\alpha(u))}\frac{1}{h}\left\langle w, \int_u^{u+h}-\widetilde \vv(\alpha(s))ds\right\rangle+\epsilon\\
&=&\displaystyle-\min_{w\in\partial f(\alpha(u))}\frac{1}{h}\left\langle w, \int_u^{u+h}\widetilde \vv(\alpha(s))ds\right\rangle+\epsilon\\
&\leqslant&\displaystyle-\min_{w\in\partial f(\alpha(u))}\frac{1}{h}\left\langle w, \int_u^{u+h}\widetilde \vv(\alpha(u))ds\right\rangle+\\
&&\displaystyle\max_{w\in\partial f(\alpha(u))}\frac{1}{h}\left\langle w, \int_u^{u+h}(\widetilde \vv(\alpha(s))-\widetilde \vv(\alpha(u)))ds\right\rangle+\epsilon\\
&\leqslant&\displaystyle-\min_{w\in\partial f(\alpha(u))}\langle w, \widetilde \vv(\alpha(u))\rangle+\\
&&\displaystyle\max_{w\in\partial f(\alpha(u))}\frac{\|w\|}{h}\int_u^{u+h}\|\widetilde \vv(\alpha(s))-\widetilde \vv(\alpha(u))\|ds+\epsilon\\
&\leqslant&\displaystyle-\min_{w\in\partial f(\alpha(u))}\langle w, \varphi(\alpha(u)) \vv(\alpha(u))\rangle+\\
&&\displaystyle\max_{w\in\partial f(\alpha(u))}\frac{\|w\|}{h}\int_u^{u+h}K\|\alpha(s)-\alpha(u)\|ds+\epsilon\\
&<&\displaystyle-\varphi(\alpha(u))b+\displaystyle \frac{KK'}{h}\int_u^{u+h}(s-u)ds+\epsilon\\
&=&\displaystyle-\varphi(\alpha(u))b+\displaystyle \frac{KK'}{h}\int_u^{u+h}d\left(\frac{s^2}{2}-us\right)+\epsilon\\
&=&\displaystyle-\varphi(\alpha(u))b+\displaystyle \frac{KK'h}{2}+\epsilon,
\end{array}\end{equation}
where the first inequality follows from the definition of generalized directional derivative, the first equality follows from Lemma~\ref{Clarke}(iii) and the last inequality follows from~\eqref{v} and~\eqref{su}.
Therefore 
\begin{equation}\label{alphauh}f(\alpha(u+h))-f(\alpha(u))<-\varphi(\alpha(u))bh+\frac{KK'h^2}{2}+\epsilon h\leqslant-\frac{\varphi(\alpha(u))}{2}bh<0.\end{equation}
This implies that $f\circ\alpha$ is decreasing at $u.$

It remains to prove the second statement. 
Assume $\alpha(u)\in V.$
In view of~\eqref{varphi}, we have $\varphi(\alpha(u))=1$.
Moreover, $\frac{\alpha(u+h)-\alpha(u)}{h}\in\overline{\mathbb B}^n$ by~\eqref{su}.
Thus, in view of~\eqref{b/4}, for all $h\in \left(0,\min\left\{\frac{b}{2KK'},h_0\right\}\right]$, by replacing the first inequality in~\eqref{long} by the following one
$$\displaystyle\frac{f(\alpha(u+h))-f(\alpha(u))}{h}<\displaystyle f^\circ\left[x;\frac{\alpha(u+h)-\alpha(u)}{h}\right]+\epsilon,$$
where $x\in D$ is a point such that $\dist(\alpha(u),D)=\|\alpha(u)-x\|,$ and repeating the computation in~\eqref{long}, we get 
$$\displaystyle\frac{f(\alpha(u+h))-f(\alpha(u))}{h}<-b+\displaystyle \frac{KK'h}{2}+\epsilon.$$
Consequently
$$f(\alpha(u+h))-f(\alpha(u))<-bh+\frac{KK'h^2}{2}+\epsilon h\leqslant-\frac{bh}{2}.$$
\end{proof}

\begin{proof}[Proof of Proposition~\ref{Prop1}]
Let
\begin{equation*}\label{h}h:=\min\left\{\frac{b}{2KK'},h_0\right\}>0.\end{equation*}
By construction, it is clear that $\displaystyle\sup_{x\in D_k}\dist(x,D)\to 0$ as $k\to +\infty.$ 
Therefore, for $k$ large enough, 
\begin{equation}\label{DkV}D_k\subset V\ \ \text{ and }\ \ \frac{1}{k}<\frac{hb}{2}.\end{equation}
Let us fix such an integer $k$.
For each $t\in[0,1]$, let $\phi(t,s)$ be the (unique) trajectory of $-\widetilde \vv$ with the initial condition $\phi(t,0)=\gamma^k(t),$ i.e.,
$$\phi(t,s)=\gamma^k(t)-\int_0^s\widetilde \vv(\phi(t,s'))d s'.$$
According to~\cite[Theorem 9.5]{Coleman2012}, the mapping $\phi$ is continuous with respect to $t$.
By construction, $x^*,y^*\not\in U$ and $\widetilde \vv$ vanishes outside of $U.$ 
So $$\phi(0,s)=\gamma^k(0)=x^* \text{ and } \phi(1,s)=\gamma^k(0)=y^* \text{ for all } s.$$
Consequently $\phi(\cdot,h)\in\mathcal A$. 
We will show that $f(\phi(t,h))<c$ for any $t\in[0,1]$ which contradicts~\eqref{c}. 
Note that for $t\in[0,1]$ such that $\phi(t,0)=\gamma^k(t)\not\in D_k$, in light of Lemma~\ref{decreasing}, one has 
\begin{equation}\label{notinDk}f(\phi(t,h))\leqslant f(\phi(t,0))=f(\gamma^k(t))<c.\end{equation}
Thus it is enough to consider $t\in[0,1]$ such that $\gamma^k(t)\in D_k.$ Observe that for all such $t$, we have $\varphi(\phi(t,0))=1$ in light of~\eqref{varphi}  and~\eqref{DkV}.
Therefore, by Lemma~\ref{decreasing}, the fact $\gamma^k\in\mathcal A\left(R_0,\frac{1}{k}\right)$ and~\eqref{DkV}, we get 
\begin{equation}\label{inDk}f(\phi(t,h))<f(\phi(t,0))-\frac{bh}{2}<c+\frac{1}{k}-\frac{bh}{2}<c.\end{equation}
Now, from~\eqref{notinDk} and~\eqref{inDk}, it follows that $f(\phi(t,h))<c$ for all $t\in[0,1]$. 
This contradicts the definition of $c$ given by~\eqref{c} and so ends the proof of Proposition~\ref{Prop1}.
\end{proof}

\begin{proposition}\label{Prop2} 
If $\lim_{\epsilon\to 0^+} R(\epsilon)  = +\infty$ then $c\in T_\infty(f)$.
\end{proposition}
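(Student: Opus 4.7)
The plan is to proceed by contradiction: assume $c \notin T_\infty(f)$; for each small $\epsilon > 0$, produce a path in $\mathcal A$ with max value below $c + \epsilon$ whose image lies in a ball of radius independent of $\epsilon$; this contradicts $\lim_{\epsilon \to 0^+} R(\epsilon) = +\infty$. The key starting point is a quantitative reformulation of the non-tangency condition. View $F \colon \mathbb R^n \to \mathbb R^2$, $F(x) := (f(x), \tfrac12 \|x\|^2)$, as a locally Lipschitz map. Each element of $\partial F(x)$ has first row $v \in \partial f(x)$ and second row $x^\top$, so has rank $2$ iff $\rank\{x, v\} = 2$. Combined with the upper semi-continuity and compactness of $\partial f$ (Lemma~\ref{Clarke}), the assumption $c \notin T_\infty(f)$ yields constants $R_0, \epsilon_0 > 0$ such that every element of $\partial F(x)$ has rank $2$ on
$$\Omega := \{x \in \mathbb R^n :\ \|x\| \geqslant R_0,\ |f(x) - c| \leqslant \epsilon_0\}.$$
Applying Lemma~\ref{CR} at every $x^0 \in \Omega$ gives a bi-Lipschitz chart $\eta_{x^0} \colon W_{x^0} \to \eta_{x^0}(W_{x^0})$ with $F \circ \eta_{x^0}^{-1}(u) = (u_1, u_2) + F(x^0)$; thus $u_1$ records $f - f(x^0)$ and $u_2$ records $\tfrac12(\|x\|^2 - \|x^0\|^2)$.

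Fix $R_1 > R_0$ and consider the compact set $X := \{x :\ \|x\| = R_1,\ |f(x) - c| \leqslant \epsilon_0/2\}$. For each $x^0 \in X$ (so $\|x^0\| = R_1$), the chart $\eta_{x^0}$ sends $W_{x^0} \cap X$ into $\{u_2 = 0\}$; extract a finite open subcover $\{Z_i \subset W_i\}_{i=1}^p$ of $X$ fitting the hypotheses of Lemma~\ref{Decompose}, with associated constants $\lambda, L$. Fix $\epsilon \in (0, \epsilon_0/2)$. By hypothesis there is $\gamma \in \mathcal A$ with $\max f\circ\gamma < c + \epsilon$ and $\max \|\gamma\| > R_1 + 1$. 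A piecewise-linear approximation of $\gamma$ (with $f$ controlled via Lemma~\ref{direction}) ensures that on each maximal sub-interval $[a, b]$ where $\|\gamma(t)\| \geqslant R_1$ and $f(\gamma(t)) \geqslant c - \epsilon_0/2$, the curve $\gamma\vert_{[a,b]}$ lies within $\lambda/(2L^2)$ of $X$. Lemma~\ref{Decompose} then produces a partition $a = T_0 < \cdots < T_q = b$ and indices $i_0 \neq \cdots \neq i_{q-1}$ with $\gamma[T_{k-1}, T_k] \subset W_{i_{k-1}}$ and mirror points $\eta_{i_k}^{-1}(w^k) \in W_{i_{k-1}} \cap W_{i_k}$. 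Define $\tilde\gamma$ on $[T_{k-1}, T_k]$ to be the $\eta_{i_{k-1}}$-preimage of the straight segment in chart coordinates joining the reflected starting point $w^{k-1}$ (for $k = 1$ take instead $\eta_{i_0}(\gamma(a))$, which already has $u_2 = 0$) to $\eta_{i_{k-1}}(\eta_{i_k}^{-1}(w^k))$ (for $k = q$, replace by $\eta_{i_{q-1}}(\gamma(b))$). A direct calculation using $\|x^0_{i_{k-1}}\| = R_1$ shows that both endpoints lie in $\{u_2 \leqslant 0\} \cap \eta_{i_{k-1}}(W_{i_{k-1}})$, so the chord does too; since $F \circ \eta_{i_{k-1}}^{-1}$ depends only on $(u_1, u_2)$, we obtain $\|\tilde\gamma(t)\| \leqslant R_1$ and, along the chord, $f(\tilde\gamma(t))$ is a linear interpolation between $f(\gamma(T_{k-1}))$ and $f(\gamma(T_k))$, hence strictly below $c + \epsilon$. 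Consecutive pieces meet continuously at the common point $\eta_{i_k}^{-1}(w^k)$. Outside these sub-intervals leave $\gamma$ unchanged, and handle the complementary portion where $\|\gamma\| > R_1$ but $f(\gamma) < c - \epsilon_0/2$ by a straightforward radial retraction (legitimate since $f$ is already well below $c + \epsilon$ there). The result $\tilde\gamma$ belongs to $\mathcal A(R_1, \epsilon)$, forcing $R(\epsilon) \leqslant R_1$ for every $\epsilon \in (0, \epsilon_0/2)$ and contradicting $\lim R(\epsilon) = +\infty$.

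The most delicate point is the continuity of $\tilde\gamma$ across chart transitions. The natural reflected endpoint at $T_k$ computed in chart $\eta_{i_{k-1}}$ and the natural reflected starting point computed in chart $\eta_{i_k}$ share the same $(f, \|x\|)$ value but generally represent different points in $\mathbb R^n$; Lemma~\ref{Decompose}(ii) is tailored precisely to this situation, selecting the common representative $\eta_{i_k}^{-1}(w^k)$ inside the overlap $W_{i_{k-1}} \cap W_{i_k}$, so both pieces can be routed through it. A secondary obstacle is that the charts produced by Lemma~\ref{CR} cover only a tubular neighbourhood of $X = S_{R_1}$; if $\gamma$ reaches norms much larger than $R_1$, the construction must be iterated on successive concentric shells $R_1 < R_2 < \cdots < R_N$, at each stage reducing the maximum norm of the current path by one tubular width while preserving $\max f$, until $\max \|\tilde\gamma\| \leqslant R_1$.
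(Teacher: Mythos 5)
Your outline correctly identifies the geometric core of the argument: regard $F(x)=(f(x),\|x\|^2)$ as a locally Lipschitz map, note that non-tangency makes $\partial F$ of rank $2$, use the Lipschitz constant-rank lemma to get bi-Lipschitz charts in which $F$ becomes a linear projection, and then straighten the path in chart coordinates via Lemma~\ref{Decompose} so that it slips inside the critical sphere. This is exactly the mechanism that powers the paper's Lemma~\ref{key}. However, there is a genuine gap in how you set up the scene for that mechanism.

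The first problem is that you fix a sphere $\mathbb S^{n-1}_{R_1}$ and a compact set $X\subset\mathbb S^{n-1}_{R_1}$, but the path $\gamma\in\mathcal A$ with $\max f\circ\gamma<c+\epsilon$ is only known to leave $\mathbb B^n_{R_1}$ — nothing confines it to a neighbourhood of $\mathbb S^{n-1}_{R_1}$. Your assertion that the piecewise linear approximation of $\gamma$ restricted to $\{t:\|\gamma(t)\|\geqslant R_1,\ f(\gamma(t))\geqslant c-\epsilon_0/2\}$ lies within $\lambda/(2L^2)$ of $X$ is therefore unjustified: the path may wander to radius $R_1+100$, far outside the tubular neighbourhood covered by the charts built around $X$, and Lemma~\ref{Decompose} then does not apply. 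The paper avoids this by working at the exact threshold radius $R(\epsilon)$ and choosing $\gamma\in\mathcal A(R(\epsilon)+\epsilon',\epsilon)$, so the external portion of the path is automatically trapped in the thin annulus $R(\epsilon)\leqslant\|\cdot\|\leqslant R(\epsilon)+\epsilon'$, and a compactness argument (the argument around~\eqref{nu}) shows this annular portion really is uniformly close to $X$ when $\epsilon'$ is small. You acknowledge the issue at the end via the ``iterate on concentric shells'' remark, but that is a gesture, not an argument: each shell requires its own cover and its own Lipschitz constants, the widths may shrink without lower bound, and there is no control on how many shells are needed.

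The second problem is the proposed radial retraction of the pieces where $\|\gamma\|>R_1$ but $f(\gamma)<c-\epsilon_0/2$. Radial retraction does not preserve an upper bound on $f$: pulling a point $x$ inward to $R_1 x/\|x\|$ can raise $f$ well above $c+\epsilon$, and nothing in the non-tangency hypothesis prevents that, since that region may sit entirely outside $\Omega$. This is precisely what the paper's Lemma~\ref{1} is designed to fix: it is \emph{not} a bare radial retraction but a carefully calibrated deformation (using the quantity $\dist(\beta(t),f^{-1}(c+\epsilon/2))$ and the gap~\eqref{xi}) which produces a path whose portion outside $\mathbb B^n_{R(\epsilon)}$ satisfies $f>c$. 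Without this preprocessing, the chord-in-chart construction has nothing to attach to, and the final inequality $f(\widetilde\gamma(t))<c+\epsilon$ cannot be obtained. So the missing ingredient relative to the paper is Lemma~\ref{1} — the confinement of the path to an $\epsilon'$-annulus of $\mathbb S^{n-1}_{R(\epsilon)}$ with $f>c$ on the outside part — and once that is supplied your chart-gluing step matches the paper's Lemma~\ref{key} essentially verbatim.
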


Let us make some preparation before proving Proposition~\ref{Prop2}.
Recall that the set $\mathcal A(r,\epsilon)$ and the real number $R(\epsilon)$ are defined respectively by~\eqref{Are} and~\eqref{Re}. 
Let $\epsilon'>0.$ 
For any $\gamma\in \mathcal A(R(\epsilon)+\epsilon',\epsilon),$ we have $\gamma\setminus \mathbb B_{R(\epsilon)}^{n}\ne\emptyset,$ 
so the set 
$$I=I(\gamma):=\{t\in[0,1]:\ \|\gamma(t)\|\geqslant R(\epsilon)\}$$ 
is non-empty.

\begin{lemma}\label{1}
For all $\epsilon>0$ small enough, we have
$$f^{-1}(c+\epsilon)\ne\emptyset\ \text{and}\ R(\epsilon)>\max\{\|x^*\|,\|y^*\|\}.$$ 
Moreover, for all $\epsilon'>0$ (depending on $\epsilon$) small enough,
there is a piecewise linear curve $\gamma\in\mathcal A(R(\epsilon)+\epsilon',\epsilon)$ such that $f(\gamma(t))> c$ for all $t\in I=I(\gamma).$
\end{lemma}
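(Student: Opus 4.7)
I would prove the three conclusions in order of increasing difficulty. The norm inequality $R(\epsilon) > \max\{\|x^*\|,\|y^*\|\}$ for $\epsilon$ small is immediate from the standing assumption of Proposition~\ref{Prop2} that $\lim_{\epsilon\to0^+}R(\epsilon) = +\infty$. For $f^{-1}(c+\epsilon) \neq \emptyset$, I would first show $\sup_{x\in\mathbb R^n} f(x) > c$: if not, then every path $\gamma\in\mathcal A$ satisfies $\max_t f(\gamma(t)) \le c$, which combined with $\max_t f(\gamma(t)) \ge c$ from the definition of $c$ forces equality, so the line segment from $x^*$ to $y^*$ would lie in $\mathcal A(\|x^*\|+\|y^*\|,\epsilon)$ for every $\epsilon>0$, contradicting $R(\epsilon)\to+\infty$. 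Given $\sup f > c$, the image $f(\mathbb R^n)$ is an interval (by connectedness and continuity) containing both $f(x^*) < c$ and values $>c$, so $c+\epsilon\in f(\mathbb R^n)$ for every small $\epsilon$.

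For the piecewise linear curve, I would work in two steps. First, by definition of $R(\epsilon)$ as an infimum, choose $\gamma_0\in\mathcal A$ with $\max_t\|\gamma_0(t)\| < R(\epsilon)+\epsilon'/4$ and $\max_t f(\gamma_0(t)) < c+\epsilon$; a sufficiently fine piecewise linear interpolant $\gamma_1$ of $\gamma_0$ (using uniform continuity of $f$ on a compact neighborhood of $\gamma_0([0,1])$) then lies in $\mathcal A(R(\epsilon)+\epsilon'/2,\epsilon)$.

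Second, modify $\gamma_1$ so that $f(\gamma(t))>c$ holds on $I(\gamma)$. I would identify the \emph{bad excursions}: maximal intervals $[a,b]\subset[0,1]$ on which $\|\gamma_1\|\ge R(\epsilon)$ and $f(\gamma_1)\le c$ at some interior point. On each such $[a,b]$, I would replace $\gamma_1|_{[a,b]}$ by its radial contraction $t\mapsto\lambda\gamma_1(t)$ with $\lambda=(R(\epsilon)-\delta)/R(\epsilon)$ for a small $\delta>\epsilon'/2$, joined to the rest of $\gamma_1$ by short linear radial bridges. The contracted piece has norm at most $R(\epsilon)-\delta+\lambda\epsilon'/2 < R(\epsilon)$, and since $f$ is locally Lipschitz with some constant $L$ on a compact neighborhood of $\gamma_1([0,1])$, the radial displacement by at most $\delta$ increases $f$ by at most $L\delta$. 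For $\delta$ small the modified piecewise linear curve $\gamma$ therefore remains in $\mathcal A(R(\epsilon)+\epsilon',\epsilon)$.

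The main obstacle is to guarantee $f(\gamma(t))>c$ for $t\in I(\gamma)$ at the endpoints of the bridges: at $t=a$ the curve still equals $\gamma_1(a)\in\partial\mathbb B^n_{R(\epsilon)}$, putting $a\in I(\gamma)$, yet $f(\gamma_1(a))$ could be $\le c$. To circumvent this I would start the bridge at a time $a'<a$ slightly before the excursion, when $\gamma_1$ is already strictly inside $\mathbb B^n_{R(\epsilon)}$ (possible because $\|\gamma_1\|$ is continuous and $\|\gamma_1(a)\|=R(\epsilon)$ is the exact crossing time), and analogously past $b$; then the remaining portions of $\gamma$ lying in $I(\gamma)$ sit entirely inside \emph{good} excursions of $\gamma_1$, where $f>c$ by construction. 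Reconciling these quantitative choices requires $\epsilon'$ small relative to $L$ and to $c+\epsilon-\max_t f(\gamma_1(t))$, both of which can be arranged after fixing $\epsilon$.
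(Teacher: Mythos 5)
Your overall strategy---radially pulling the part of the curve that leaves $\mathbb{B}^n_{R(\epsilon)}$ with $f\not> c$ back inside the ball---is the same mechanism that drives the paper's construction (the paper's $\zeta(t)=g(t)\,\beta(t)/\|\beta(t)\|$ is also a radial contraction). Your argument for $f^{-1}(c+\epsilon)\ne\emptyset$ via $R(\epsilon)\to+\infty$ and connectedness is a nice explicit step where the paper just says this is clear. But there is a genuine gap in the second half.

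The gap is a circularity in the choice of $\epsilon'$. Your contraction displaces points of a bad excursion by at least roughly $\epsilon'/2$ in the worst case: those points have norm up to $R(\epsilon)+\epsilon'/2$ and must be pulled strictly inside $\mathbb{B}^n_{R(\epsilon)}$. Hence your Lipschitz estimate needs $L\epsilon'$ to be dominated by the margin $c+\epsilon-\max_t f(\gamma_1(t))$. But $\gamma_1$ has to be chosen \emph{after} $\epsilon'$: to land the final curve in $\mathcal A(R(\epsilon)+\epsilon',\epsilon)$ you need $\max_t\|\gamma_1(t)\|<R(\epsilon)+\epsilon'/2$, and you cannot fix a single $\gamma_1$ once and for all, because $R(\epsilon)$ is an infimum that need not be attained, so any fixed $\gamma_1$ eventually exceeds the bound $R(\epsilon)+\epsilon'$. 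Thus the margin $c+\epsilon-\max_t f(\gamma_1(t))$ depends on $\epsilon'$, and nothing prevents it from tending to $0$ as $\epsilon'\to 0^+$. Writing $f^*(r):=\inf\{\max_t f(\gamma(t)):\gamma\in\mathcal A,\ \max_t\|\gamma(t)\|\le r\}$, the definition of $R(\epsilon)$ gives $f^*(r)<c+\epsilon$ for $r>R(\epsilon)$, but it does not rule out $c+\epsilon-f^*(R(\epsilon)+\epsilon'/2)=o(\epsilon')$, in which case your required inequality fails for all small $\epsilon'$. The remark that both constraints ``can be arranged after fixing $\epsilon$'' skips precisely this point.

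The paper avoids the issue by never invoking a Lipschitz bound for $f$. It fixes $\epsilon'$ against a \emph{path-independent} quantity, $2\epsilon'<\dist\bigl(\{f\le c\}\cap\overline{\mathbb{B}}^n_{R(\epsilon)+1},\,f^{-1}(c+\epsilon/2)\bigr)$, picks an arbitrary $\beta\in\mathcal A(R(\epsilon)+\epsilon',\epsilon)$, and contracts $\beta(t)$ radially by exactly $\min\bigl\{\|\beta(t)\|-(R(\epsilon)-\epsilon'),\ \dist(\beta(t),f^{-1}(c+\epsilon/2))\bigr\}$ in the regime $\|\beta(t)\|>R(\epsilon)-\epsilon'$, $f(\beta(t))<c+\epsilon/2$. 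Since the displacement never exceeds $\dist(\beta(t),f^{-1}(c+\epsilon/2))$, the intermediate value theorem alone gives $f(\zeta(t))\le c+\epsilon/2<c+\epsilon$ (no Lipschitz constant needed), and the $\epsilon'$-inequality forces $f(\zeta(t))>c$ wherever $\|\zeta(t)\|\ge R(\epsilon)$; the piecewise-linear approximation is done only afterwards. This single formula also dispenses with the bookkeeping you were forced into (endpoints of ``good'' excursions where $f\le c$ can still occur, and possible overlap of bridges with adjacent excursions), so if you want to keep your bad-excursion framework you should replace the flat factor $\lambda$ by this level-set-distance contraction.
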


\begin{proof} 
The first statement is clear so let us prove the second one. 
For this, let $\epsilon'>0$ be such that
\begin{equation}\label{xi}2\epsilon'<\min\left\{\dist\left(\left\{f\leqslant c\right\}\cap\overline {\mathbb B}^n_{R(\epsilon)+1},f^{-1}\left(c+{\epsilon}/{2}\right)\right),R(\epsilon)-\|x^*\|,R(\epsilon)-\|y^*\|,2\right\}.\end{equation}
Pick an arbitrary $\beta\in\mathcal A(R(\epsilon)+\epsilon',\epsilon),$ we will deform $\beta$ to get the desired curve. 
Set
$$g(t):=\left\{\begin{array}{lll}
\|\beta(t)\| &\text{if }\ \|\beta(t)\|\leqslant R(\epsilon)-\epsilon'\\
             &\text{or }\ \displaystyle f(\beta(t))\geqslant c+{\epsilon}/{2}\\
\max\left\{
\begin{array}{lll}R(\epsilon)-\epsilon',\\
\displaystyle\|\beta(t)\|-\dist\left(\beta(t),f^{-1}\left(c+{\epsilon}/{2}\right)\right)
\end{array}\right\}
&\text{otherwise}. 
\end{array}\right.$$
Let us show that $g$ is continuous on $[0,1]$. 
Observe that the function 
$$[0,1]\to \mathbb R,\ t\mapsto\max\left\{R(\epsilon)-\epsilon',
\displaystyle\|\beta(t)\|-\dist\left(\beta(t),f^{-1}\left(c+{\epsilon}/{2}\right)\right)\right\}$$
is continuous. Thus, it is clear that $g$ is continuous at any $t\in [0,1]$ such that 
$$\|\beta(t)\|\ne R(\epsilon)-\epsilon'\ \text{ and }\ \displaystyle f(\beta(t))\ne c+{\epsilon}/{2}.$$
It remains to show that $g$ is continuous at any $t\in [0,1]$ such that 
$$\|\beta(t)\|= R(\epsilon)-\epsilon'\ \text{ or }\ \displaystyle f(\beta(t))= c+{\epsilon}/{2}.$$
Firstly, let $t\in [0,1]$ be such that $\|\beta(t)\|= R(\epsilon)-\epsilon'.$ 
Suppose that $t_k\in[0,1]$ is a sequence such that $t_k\to t$ with $\|\beta(t_k)\|>R(\epsilon)-\epsilon',$ we need to show that $g(t_k)\to g(t)$ 
(note that if such a sequence does not exist, then $g$ is continuous at $t$ obviously).
For this, it is enough to assume that 
\begin{equation}\label{>}\|\beta(t_k)\|-\dist\left(\beta(t_k),f^{-1}\left(c+{\epsilon}/{2}\right)\right)> R(\epsilon)-\epsilon'\end{equation}
for all $k$ and show that 
$$\|\beta(t_k)\|-\dist\left(\beta(t_k),f^{-1}\left(c+{\epsilon}/{2}\right)\right)\searrow R(\epsilon)-\epsilon'=\|\beta(t)\|.$$ 
This is equivalent to show that $\dist\left(\beta(t_k),f^{-1}\left(c+\displaystyle{\epsilon}/{2}\right)\right)\to 0,$ i.e., 
$$\dist\left(\beta(t),f^{-1}\left(c+\displaystyle{\epsilon}/{2}\right)\right)= 0.$$
On the other hand, from~\eqref{>}, we get 
$$\|\beta(t_k)\|> R(\epsilon)-\epsilon'+\dist\left(\beta(t_k),f^{-1}\left(c+{\epsilon}/{2}\right)\right).$$
Letting $k\to+\infty,$ we get
$$\|\beta(t)\|\geqslant R(\epsilon)-\epsilon'+\dist\left(\beta(t),f^{-1}\left(c+{\epsilon}/{2}\right)\right)=\|\beta(t)\|+\dist\left(\beta(t),f^{-1}\left(c+{\epsilon}/{2}\right)\right).$$
Hence $\dist\left(\beta(t),f^{-1}\left(c+\displaystyle{\epsilon}/{2}\right)\right)= 0$ and so $g$ is continuous at any $t\in [0,1]$ such that $\|\beta(t)\|= R(\epsilon)-\epsilon'.$
Now we show that $g$ is continuous at any $t\in [0,1]$ such that $\displaystyle f(\beta(t))= c+{\epsilon}/{2}$. 
Let $t_k\in[0,1]$ be a sequence such that $t_k\to t$. 
Without loss of generality, assume that $\|\beta(t)\|> R(\epsilon)-\epsilon'$ and $\|\beta(t_k)\|\ne\|\beta(t)\|$ for all $k$. 
Note that
$$\|\beta(t_k)\|-\dist\left(\beta(t_k),f^{-1}\left(c+{\epsilon}/{2}\right)\right)\to \|\beta(t)\|-\dist\left(\beta(t),f^{-1}\left(c+{\epsilon}/{2}\right)\right)=\|\beta(t)\|.$$
Thus, for $k$ large enough,
$$\|\beta(t_k)\|-\dist\left(\beta(t_k),f^{-1}\left(c+{\epsilon}/{2}\right)\right)>R(\epsilon)-\epsilon'$$
Hence, by definition,
$$g(t_k)=\|\beta(t_k)\|-\dist\left(\beta(t_k),f^{-1}\left(c+{\epsilon}/{2}\right)\right)$$
which yields $g(t_k)\to \|\beta(t)\|.$
Consequently $g$ is continuous at any $t\in [0,1]$ such that $\displaystyle f(\beta(t))= c+{\epsilon}/{2}$ and so is continuous on $[0,1]$.

Set
$$\zeta(t):=\left\{
\begin{array}{lll}g(t)\displaystyle\frac{\beta(t)}{\|\beta(t)\|} &\text{ if } & \beta(t)\ne 0\\
0 &\text{ if } & \beta(t)= 0.
\end{array}\right.$$
We will show that $\zeta$ has the desired properties except being piecewise linear. 
It is clear that $\zeta(t)$ is continuous and 
$$\|\zeta(t)\|\leqslant\|\beta(t)\|\leqslant R(\epsilon)+\epsilon'\ \text{ for any }\ t\in[0,1].$$ 
By~\eqref{xi} and by the definition of the function $g$, it follows that 
$\zeta(0)=\beta(0)=x^*$ and $\zeta(1)=\beta(1)=y^*$. 
Moreover, for all $t\in[0,1]$ such that $\|\beta(t)\|>R(\epsilon)-\epsilon'$ and $\displaystyle f(\beta(t))<c+{\epsilon}/{2},$ we have
$$\begin{array}{lll}
\|\beta(t)-\zeta(t)\|=\|\beta(t)\|-g(t)&=&\min\left\{\|\beta(t)\|-(R(\epsilon)-\epsilon'),\dist\left(\beta(t),f^{-1}\left(c+\displaystyle{\epsilon}/{2}\right)\right)\right\}\\
&\leqslant& \dist\left(\beta(t),f^{-1}\left(c+\displaystyle{\epsilon}/{2}\right)\right).
\end{array}$$
This, together with the fact  $f(\beta(t))< c+\displaystyle{\epsilon}/{2}$, implies $f(\zeta(t))\leqslant c+\displaystyle{\epsilon}/{2}.$ 
Consequently $\zeta\in\mathcal A(R(\epsilon)+\epsilon',\epsilon).$ 
Now we show that $f(\zeta(t))> c$ for all $t\in I(\zeta).$ 
Assume that $\|\zeta(t)\|\geqslant R(\epsilon)$, then $\|\beta(t)\|\geqslant R(\epsilon)$ and so
$$\epsilon'>\|\beta(t)-\zeta(t)\|=\dist\left(\beta(t),f^{-1}\left(c+{\epsilon}/{2}\right)\right).$$
Combining this with~\eqref{xi} gives
$$\begin{array}{lll}\dist\left(\zeta(t),f^{-1}\left(c+\displaystyle{\epsilon}/{2}\right)\right)&\leqslant& \|\beta(t)-\zeta(t)\|+\dist\left(\beta(t),f^{-1}\left(c+\displaystyle{\epsilon}/{2}\right)\right)\\
&<&2\epsilon'<\dist\left(\left\{f\leqslant c\right\}\cap\overline {\mathbb B}^n_{R(\epsilon)+1},f^{-1}\left(c+\displaystyle{\epsilon}/{2}\right)\right)\\
&\leqslant&\dist\left(\left\{f\leqslant c\right\}\cap\overline {\mathbb B}^n_{R(\epsilon)+\epsilon'},f^{-1}\left(c+\displaystyle{\epsilon}/{2}\right)\right)
\end{array}$$
which yields $f(\zeta(t))>c.$ 

Finally, we need to deform $\zeta$ to get the desired curve. 
Set 
$$\lambda:=\min\left\{c+\epsilon-\max_{t\in[0,1]}f(\zeta(t)),\min_{t\in I(\zeta)}f(\zeta(t))-c\right\}>0.$$
Since $\zeta$ and $f$ are continuous on $[0,1]$ and $\overline{\mathbb B}^n_{R(\epsilon)+\epsilon'}$, respectively, they are uniformly continuous on the respective sets by the Heine--Cantor theorem.
Thus there are constants $\nu,\nu'>0$ such that 
$$\|\zeta(t)-\zeta(s)\|<\nu\ \text{ for } t,s\in[a,b] \text{ with } |t-s|<\nu'$$ 
and 
$$|f(x)-f(y)|<\lambda\ \text{ for } x,y\in \overline{\mathbb B}^n_{R(\epsilon)+\epsilon'} \text{ with } \|x-y\|<\nu.$$
Let $a=:T_0<T_1<\dots<T_q:=b$ be a finite sequence such that $|T_i-T_{i-1}|<\nu',$ for $i=1,\dots,q$
and let $\gamma\colon[0,1]\to \overline{\mathbb B}^n_{R(\epsilon)+\epsilon'}$ be the continuous piecewise linear curve defined by the sequence $\{\zeta(T_0),\dots,\zeta(T_q)\}$ so $\gamma(0)=\zeta(0)$, $\gamma(1)=\zeta(1)$. It is not hard to check the following facts:
\begin{enumerate}[{\rm (a)}]
\item $\|\gamma(t)\|\leqslant R(\epsilon)+\epsilon' \text{ for any } t\in[0,1]$;
\item $\max\{|f(\gamma(t))-f(\zeta(T_{i-1}))|,|f(\gamma(t))-f(\zeta(T_{i}))|\}<\lambda$ for $t\in[T_{i-1},T_{i}]\ (i=1,\dots,q).$
\end{enumerate}
For  all $t\in [0,1],$ let $i$ be such that $t\in [T_{i-1},T_i].$
By (b), we have
$$\begin{array}{lll}f(\gamma(t))&\leqslant&f(\gamma(T_i))+|f(\gamma(t))-f(\gamma(T_i))|=f(\zeta(T_i))+|f(\gamma(t))-f(\zeta(T_i))|\\
&<&f(\zeta(T_i))+\lambda\leqslant f(\zeta(T_i))+c+\epsilon-\max_{s\in[0,1]}f(\zeta(s))\leqslant c+\epsilon.
\end{array}$$
Combining this with (a) yields 
$$\gamma\in\mathcal A(R(\epsilon)+\epsilon',\epsilon).$$ 
We will show that $f(\gamma(t))> c$ for all $t\in I=I(\gamma),$ which ends the proof of the lemma.
Pick arbitrarily $t\in I$ and assume that $[\zeta(T_{i-1}),\zeta(T_i)]$ is the line segment containing $\gamma(t).$ 
Since $\|\gamma(t)\|\geqslant R(\epsilon),$ it follows that $\max\{\|\zeta(T_{i-1})\|,\|\zeta(T_{i})\|\}\geqslant R(\epsilon).$
Without loss of generality, assume that $\|\zeta(T_{i})\|\geqslant R(\epsilon),$ so $\zeta(T_{i})\in I(\zeta).$
From this and (b), we get 
$$\begin{array}{lll}f(\gamma(t))&\geqslant&f(\gamma(T_i))-|f(\gamma(t))-f(\gamma(T_i))|=f(\zeta(T_i))-|f(\gamma(t))-f(\zeta(T_i))|\\
&>&f(\zeta(T_i))-\lambda\geqslant f(\zeta(T_i))-\min_{t\in I(\zeta)}f(\zeta(t))-c\geqslant c.
\end{array}$$
The lemma is proved.
\end{proof}

Now the proof needs the following key lemma.

\begin{lemma}\label{key} Let $\epsilon>0$ be as in Lemma~\ref{1}. Then there are $x\in X:=f^{-1}([c,c+\epsilon])\cap\mathbb S^{n-1}_{R(\epsilon)}$ and $v\in\partial f(x)$ such that $x$ and $v$ are linear dependent.
\end{lemma}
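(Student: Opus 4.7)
We proceed by contradiction. Assume that for every $x\in X$ and every $v\in\partial f(x)$, the vectors $x$ and $v$ are linearly independent. The strategy is to deform the curve $\gamma$ provided by Lemma~\ref{1} into a new path $\tilde\gamma\in\mathcal A$ with $\max_t\|\tilde\gamma(t)\|<R(\epsilon)$ and $\max_t f(\tilde\gamma(t))<c+\epsilon$, which contradicts the definition of $R(\epsilon)$.

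The key device is to bundle $f$ and $\tfrac12\|\cdot\|^2$ into a single map
\begin{equation*}
F\colon\mathbb R^n\to\mathbb R^2,\qquad F(x):=\left(f(x),\,\tfrac12\|x\|^2\right).
\end{equation*}
Since the second component is smooth, $\partial F(x)$ consists of the $2\times n$ matrices having first row $v\in\partial f(x)$ and second row $x$, so our assumption is precisely that every element of $\partial F(x)$ has rank~$2$ for $x\in X$. By Lemma~\ref{Clarke}(ii) and compactness of $X$, this persists on an open neighborhood $\Omega$ of $X$. Applying Lemma~\ref{CR} at each point of $X$ gives bi-Lipschitz charts $\eta_i\colon W_i\to\mathbb R^n$ straightening $F$ into $(u_1,u_2)+F(x_0^i)$ on open cubes; since $X$ lies in the single level set $\{F_2=R(\epsilon)^2/2\}$, one automatically has $\eta_i(W_i\cap X)\subset\{u_2=0\}$. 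Extracting a finite subfamily of open balls $Z_i\subset W_i$ still covering $X$ puts us in the setup of Lemmas~\ref{Cover} and~\ref{Decompose}, yielding constants $\lambda>0$ and $L\geqslant1$; we retain the freedom to shrink the $Z_i$ (hence $\lambda$) later.

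Now pick $\epsilon'>0$ with $\epsilon'<\lambda/(2L^2)$ and let $\gamma\in\mathcal A(R(\epsilon)+\epsilon',\epsilon)$ be the piecewise linear curve furnished by Lemma~\ref{1}. On $I(\gamma):=\{t:\|\gamma(t)\|\geqslant R(\epsilon)\}$ we have $\|\gamma\|\in[R(\epsilon),R(\epsilon)+\epsilon']$ and $f(\gamma)\in(c,c+\epsilon)$, so $\gamma(t)$ sits within $\lambda/(2L^2)$ of $X$ (after a slight enlargement of $X$ inside $\Omega$ if necessary), and the boundary points of each connected component of $I(\gamma)$ lie on the sphere with $f\in[c,c+\epsilon]$ and hence in $X$. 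Applying Lemma~\ref{Decompose} to each such component produces times $T_0<\cdots<T_q$, chart indices $i_0\neq\cdots\neq i_{q-1}$, and transitional points $\eta_{i_k}^{-1}(w^k)\in W_{i_{k-1}}\cap W_{i_k}$ with $w^k_2<0$. Build $\tilde\gamma$ by leaving $\gamma$ unchanged outside $I(\gamma)$ and, on each interval $[T_{k-1},T_k]$, replacing $\gamma$ by the $\eta_{i_{k-1}}^{-1}$-image of the straight segment in chart $i_{k-1}$ connecting the chart-coordinates of $\eta_{i_{k-1}}^{-1}(w^{k-1})$ and $\eta_{i_k}^{-1}(w^k)$. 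Near the outermost endpoints of each component of $I(\gamma)$, a short auxiliary interpolation within the first and last chart connects the unmodified part of $\gamma$ to the modified part while keeping $u_2<0$ on the modified side. Because the chart coordinate $u_1$ equals $f$ up to a constant and the endpoints of each modified segment have $f$-values equal to those of $\gamma$ at $T_k$, linearity of $u_1$ traps $f\circ\tilde\gamma$ inside an interval contained in $(c,c+\epsilon)$ on each segment, and $u_2<0$ yields $\|\tilde\gamma\|<R(\epsilon)$ there.

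The main technical obstacle is the auxiliary interpolation at each outer endpoint of a component of $I(\gamma)$, where $u_2$ must continuously descend from $0$ (on the sphere) down to the negative value $w^0_2$ (resp.\ $w^q_2$) while $u_1$ stays safely below $c+\epsilon$. The $f$-change along such an interpolation is bounded by a constant multiple of $\lambda$; on the other hand, the margin $c+\epsilon-\max_{t\in I(\gamma)}f(\gamma(t))$ can be ensured to be bounded below by, say, $\epsilon/4$ by taking in the proof of Lemma~\ref{1} an initial competitor $\beta\in\mathcal A$ with $\max_t f(\beta(t))$ sufficiently close to $c$. Shrinking the cover $\mathcal Z$ until $\lambda$ is smaller than this margin divided by a (local) Lipschitz constant of $f$ guarantees $f(\tilde\gamma(t))<c+\epsilon$ throughout, so $\tilde\gamma\in\mathcal A(r,\epsilon)$ for some $r<R(\epsilon)$ — the desired contradiction.
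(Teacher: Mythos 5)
Your outline follows the paper's strategy essentially step by step: couple $f$ with $\|\cdot\|^2$ into $F$, use the linear-independence hypothesis to get rank-$2$ Clarke Jacobians on a neighborhood of $X$, straighten $F$ via the locally Lipschitz constant-rank theorem (Lemma~\ref{CR}), apply Lemma~\ref{Decompose} to the arc of $\gamma$ outside $\mathbb B^n_{R(\epsilon)}$, and replace that arc by chart-straight segments lying in $\{u_2<0\}$ to manufacture a competitor path with strictly smaller sup-norm, contradicting the minimality of $R(\epsilon)$.

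Where you differ is the treatment of the two boundary times $a,b$ of a connected component of $I(\gamma)$, and there your argument is not yet complete. You recognize that at $t=a,b$ the path sits on the sphere (so $u_2=0$ in the chart) and propose an ``auxiliary interpolation'' controlled by (i) a margin $c+\epsilon-\max_{t\in I(\gamma)}f(\gamma(t))\geqslant\epsilon/4$ obtained by strengthening Lemma~\ref{1}, and (ii) shrinking $\lambda$. This is heavier machinery than needed, and as written it leaves two loose ends: the strengthened Lemma~\ref{1} is asserted rather than proved, and it is not verified that the resulting $\widetilde\gamma$ satisfies the \emph{strict} inequality $\|\widetilde\gamma(t)\|<R(\epsilon)$ through the transition region --- which is essential for the contradiction, since a $\widetilde\gamma$ that merely touches $\mathbb S^{n-1}_{R(\epsilon)}$ yields no competitor in $\mathcal A(r,\epsilon)$ with $r<R(\epsilon)$. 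The paper's device is much lighter: enlarge $[a,b]$ to $[a-\delta,b+\delta]$ and set $w^0:=\eta_{i_0}(\gamma(a-\delta))$, $\widetilde w^q:=\eta_{i_{q-1}}(\gamma(b+\delta))$. Since $\gamma(a-\delta)$ and $\gamma(b+\delta)$ lie strictly inside the ball with $f<c+\epsilon$, these chart points automatically satisfy $u_2<0$ and $u_1<c+\epsilon-f(x^{i_0})$ (resp.\ $f(x^{i_{q-1}})$). Then, because the chart makes $(u_1,u_2)$ affine, every replaced segment lies by convexity in $B_{x^{i_{k-1}}}\cap\{u_1<c+\epsilon-f(x^{i_{k-1}}),\ u_2<0\}$, giving both strict inequalities simultaneously everywhere along $\widetilde\gamma$ --- no margin argument, no second shrinking of the cover, and no separate auxiliary interpolation. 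I would replace your auxiliary-interpolation paragraph with this extension-of-interval argument.
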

\begin{proof} 
The construction in the proof is illustrated in the figure below.

\begin{center}
\begin{tikzpicture}[scale=1.2]
	\def\RR{50};
	\draw[line width=1.5pt,red](0,0) arc (90:84.25:\RR);
	\draw[line width=1.5pt,red](0,0) arc (90:95.75:\RR);
	\draw(0,0) arc (90:98:\RR);
	\draw(0,0) arc (90:82.5:\RR);

	\def\a{0.025};\def\b{0.045};\def\c{0.34};\def\i{2.5};\def\r{1.4};\def\f{1.1};\def\g{1.2};\def\k{0.58};\def\h{0.4};\def\j{0.15};
	\draw[domain=0:1,variable=\t,blue] plot ({-\k*\r-2*\i+\t*(\k*\r+\i)},{-\c+\t*(\h+\c)});
	\draw[domain=0:1,variable=\t,blue] plot ({-\i+\t*\i},{\h-\t*\h/2});
	\draw[domain=0:1,variable=\t,blue] plot ({\i-\t*\i},{\h-\t*\h/2});
	\draw[domain=0:1,variable=\t,blue] plot ({\k*\r+2*\i-\t*(\k*\r+\i)},{-\c+\t*(\h+\c)});
	
	\draw[domain=0:1,variable=\t,blue] plot ({-\k*\r-2*\i+\t*\j},{-\c-3*\t*\j});
	\draw[domain=0:1,variable=\t,blue] plot ({\k*\r+2*\i-\t*2*\j},{-\c-2*\t*\j});

	\draw(-2*\i+\r,0) arc (0:3600:\r);
	\draw( 2*\i+\r,0) arc (0:3600:\r);
	\draw( 1.5*\r,0) arc (0:3600:\r);
	\draw( 0.5*\r,0) arc (0:3600:\r);
	
	\draw(-2*\i+\r,-\r)--(-2*\i+\r,\r);	
		\draw(-2*\i-\r,-\r)--(-2*\i-\r,\r);	
		\draw(-2*\i+\r,-\r)--(-2*\i-\r,-\r);	
		\draw(-2*\i+\r, \r)--(-2*\i-\r,\r);

	\draw(2*\i+\r,-\r)--(2*\i+\r,\r);	
		\draw(2*\i-\r,-\r)--(2*\i-\r,\r);	
		\draw(2*\i+\r,-\r)--(2*\i-\r,-\r);	
		\draw(2*\i+\r, \r)--(2*\i-\r,\r);
		
	\draw(1.5*\r,-\r*\f)--(1.5*\r,\r*\f);	
		\draw(-0.5*\r,-\r*\f)--(-0.5*\r,\r*\f);	
		\draw(1.5*\r, -\r*\f)--(-0.5*\r,-\r*\f);	
		\draw(1.5*\r,  \r*\f)--(-0.5*\r,\r*\f);
		
	\draw(-1.5*\r,-\r*\g)--(-1.5*\r,\r*\g);	
		\draw(0.5*\r,-\r*\g)--(0.5*\r,\r*\g);	
		\draw(-1.5*\r, -\r*\g)--(0.5*\r,-\r*\g);	
		\draw(-1.5*\r,  \r*\g)--(0.5*\r,\r*\g);
		
	\def\t{-\i};
	\coordinate (X) at ({\t},{-0.1});		\node [above] at (X) {\color{red}\tiny $X$};
										    \node [below] at (-2*\i-1.3*\r,0.05*\r) {\tiny $\mathbb S^{n-1}_{R(\epsilon)}$};
						    
	\def\t{1};
	\coordinate (Gt0) at ({-\k*\r-2*\i+\t*\j},{-\c-3*\t*\j});	\filldraw (Gt0) circle (1pt);	\node [right] at ($(Gt0)-(0,0.1)$) {\tiny $\widetilde\gamma(\widetilde T_0)$};
	\def\t{0};
	\coordinate (G0) at ({-\k*\r-2*\i+\t*(\k*\r+\i)},{-\c+\t*(\h+\c)});	\filldraw (G0) circle (1pt);	\node [above] at (G0) {\tiny $\gamma(T_0)$};
	\def\t{0.5};
	\coordinate (G1) at ({-\k*\r-2*\i+\t*(\k*\r+\i)},{-\c+\t*(\h+\c)});	\filldraw (G1) circle (1pt);	\node [above] at (G1) {\tiny$\gamma(T_1)$};
	\coordinate (Gt1) at ({-\k*\r-2*\i+\t*(\k*\r+\i)},{-\c+\t*(\h+\c)-1.2*\h});	\filldraw (Gt1) circle (1pt);	\node [below] at (Gt1) {\tiny$\widetilde\gamma(T_1)$};
	\draw [green] plot [smooth] coordinates {(Gt0) ($0.5*(Gt0)+0.5*(Gt1)+(0.1,0.1)$) (Gt1)};
	
	\def\t{0.4};	
	\coordinate (Gk-1) at ({-\i+\t*\i},{\h-\t*\h/2});					\filldraw (Gk-1) circle (1pt);	\node [above] at (Gk-1) {\tiny$\gamma(T_{k-1})$};
	\coordinate (Gtk-1) at ({-\i+\t*\i+0.1},{-\h-\t*\h/2});					\filldraw (Gtk-1) circle (1pt);	\node [below] at (Gtk-1) {\tiny$\widetilde\gamma(T_{k-1})$};
	\draw [green,dashed] plot [smooth] coordinates {(Gt1) ($0.4*(Gt1)+0.6*(Gtk-1)-(0.1,0.1)$) (Gtk-1)};
	\def\t{1};
	\coordinate (Gk) at ({-\i+\t*\i},{\h-\t*\h/2});						\filldraw (Gk) circle (1pt);	\node [above] at (Gk) {\tiny$\gamma(T_{k})$};
	\coordinate (Gtk) at ({-\i+\t*\i},{-\h/2-\t*\h/2});					\filldraw (Gtk) circle (1pt);	\node [below] at (Gtk) {\tiny$\widetilde\gamma(T_{k})$};
	\draw [green] plot [smooth] coordinates {(Gtk-1) ($0.5*(Gtk-1)+0.5*(Gtk)+(0.1,0.1)$) (Gtk)};

	\def\t{0.4};
	\coordinate (Gk+1) at ({\i-\t*\i},{\h-\t*\h/2});					\filldraw (Gk+1) circle (1pt);	\node [above] at (Gk+1) {\tiny$\gamma(T_{k+1})$};
	\coordinate (Gtk+1) at ({\i-\t*\i},{-\h-\t*\h/2});					\filldraw (Gtk+1) circle (1pt);	\node [below] at (Gtk+1) {\tiny$\widetilde\gamma(T_{k+1})$};
	\draw [green] plot [smooth] coordinates {(Gtk) ($0.5*(Gtk)+0.5*(Gtk+1)-(0.1,0.1)$) (Gtk+1)};
	
	\def\t{0.5};
	\coordinate (Gq-1) at ({\k*\r+2*\i-\t*(\k*\r+\i)},{-\c+\t*(\h+\c)});\filldraw (Gq-1) circle (1pt);	\node [above] at (Gq-1) {\tiny$\gamma(T_{q-1})$};
	\coordinate (Gtq-1) at ({\k*\r+2*\i-\t*(\k*\r+\i)+0.1},{-\c+\t*(-1.2*\h+\c)});\filldraw (Gtq-1) circle (1pt);\node [below] at (Gtq-1) {\tiny$\widetilde\gamma(T_{q-1})$};
	\draw [green,dashed] plot [smooth] coordinates {(Gtk+1) ($0.5*(Gtk+1)+0.5*(Gtq-1)+(0.1,0.1)$) (Gtq-1)};
	
	\def\t{0};
	\coordinate (Gq) at ({\k*\r+2*\i-\t*(\k*\r+\i)},{-\c+\t*(\h+\c)});	\filldraw (Gq) circle (1pt);	\node [above] at (Gq) {\tiny$\gamma(T_{q})$};
	\def\t{1};
	\coordinate (Gtq) at ({\k*\r+2*\i-\t*2*\j},{-\c-2*\t*\j});			\filldraw (Gtq) circle (1pt);	\node [below] at (Gtq) {\tiny$\widetilde\gamma(\widetilde T_{q})$};
	\draw [green] plot [smooth] coordinates {(Gtq-1) ($0.5*(Gtq-1)+0.5*(Gtq)+(0.02,0.02)$) (Gtq)};
	
	\def\t{-2*\i};
	\coordinate (Wi0) at ({\t-\r/2},{\r});		\node [above] at (Wi0) {\tiny$W_{i_0}$};
	\def\t{-0.9*\r};	
	\coordinate (Wik-1) at ({\t},{\g*\r});		\node [above] at (Wik-1) {\tiny$W_{i_{k-1}}$};
	\def\t{0.9*\r};
	\coordinate (Wik) at ({\t},{\f*\r});		\node [above] at (Wik) {\tiny$W_{i_{k}}$};
	\def\t{1.9*\i};
	\coordinate (Wiq-1) at ({\t+\r/2},{\r});	\node [above] at (Wiq-1) {\tiny$W_{i_{q-1}}$};
	
	\def\t{2*\i};
	\coordinate (Zi0) at ({-\t-\r/2},{\r/2});	\node [above] at (Zi0) {\tiny$Z_{i_0}$};
	\def\t{-\r};
	\coordinate (Zik-1) at ({\t},{-0.8*\r});	\node [below] at (Zik-1) {\tiny$Z_{i_{k-1}}$};
	\def\t{\r};
	\coordinate (Zik) at ({\t},{-0.75*\r});		\node [below right] at (Zik) {\tiny$Z_{i_{k}}$};
	\def\t{2*\i};
	\coordinate (Ziq-1) at ({\t-0.3*\r},{\r/2});	\node [above] at (Ziq-1) {\tiny$Z_{i_{q-1}}$};
		
	\def\t{2*\i};
	\draw [->] (-\t,-\r-0.1)--(-\t,-2*\r-0.4);  	 \node [right] at (-\t,-1.5*\r) {\tiny$\eta_{i_0}$};
	\draw [->] ( \t,-\r-0.1)--( \t,-2*\r-0.4);  	 \node [right] at ( \t,-1.5*\r) {\tiny$\eta_{i_{q-1}}$};
	\def\t{0.8*\r};
	\draw [out=-90,in=90,->] (-\t,-\r-0.2+1-\g) to (-\t,-2*\r-0.4);  \node [right] at (-\t,-1.5*\r) {\tiny$\eta_{i_{k-1}}$};
	\draw [->] (\t,-\r-0.2+1-\f) --(\t,-2*\r-0.4);   \node [right] at (\t,-1.5*\r) {\tiny$\eta_{i_{k}}$};
		
	\def\p{3*\r};\def\e{0.6};\def\h{0.3};
	\def\t{2*\i};\def\RR{5};
	\draw(-\t+\r*\e,-\r*\e-\p)--(-\t+\r*\e,\r*\e-\p);	
		\draw(-\t-\r*\e,-\r*\e-\p)--(-\t-\r*\e,\r*\e-\p);	
		\draw(-\t+\r*\e,-\r*\e-\p)--(-\t-\r*\e,-\r*\e-\p);	
		\draw(-\t+\r*\e, \r*\e-\p)--(-\t-\r*\e,\r*\e-\p);
		\coordinate (EWi0) at ({-\t},{-\r*\e-\p});		\node [below] at (EWi0) {\tiny$\eta_{i_0}(W_{i_0})$};		
			\draw[line width=1.5pt,red](-\t,-\p)--(-\t+\r*\e,-\p);	
			\draw(-\t-\r*\e,-\p)--(-\t+\r*\e,-\p);	
			\draw[domain=-\t-\k*\r*\e:-\t+\r*\e, variable=\s,blue] plot ({\s},{sqrt(\s+\t+\k*\r*\e+\RR)-\p-sqrt(\RR)});
			\def\s{-\t-\k*\r*\e};
			\coordinate (z0) at ({\s},{sqrt(\s+\t+\k*\r*\e+\RR)-\p-sqrt(\RR)});	\filldraw (z0) circle (1pt);
			\def\s{-\t+\k*\r*\e};
			\coordinate (z1) at ({\s},{sqrt(\s+\t+\k*\r*\e+\RR)-\p-sqrt(\RR)});	\filldraw (z1) circle (1pt);
			\coordinate (w0) at ($(z0)-(-\h/4,\h)$);							\filldraw (w0) circle (1pt); \node [below] at (w0) {\tiny$w^0$};
			\coordinate (wt1) at ({\s},{-sqrt(\s+\t+\k*\r*\e+\RR)-\p+sqrt(\RR)});	\filldraw (wt1) circle (1pt); \node [below] at (wt1) {\tiny$\widetilde w^1$};
			\draw [green] (w0)--(wt1);
			
	\draw(\t+\r*\e,-\r*\e-\p)--(\t+\r*\e,\r*\e-\p);	
		\draw(\t-\r*\e,-\r*\e-\p)--(\t-\r*\e,\r*\e-\p);	
		\draw(\t+\r*\e,-\r*\e-\p)--(\t-\r*\e,-\r*\e-\p);	
		\draw(\t+\r*\e, \r*\e-\p)--(\t-\r*\e,\r*\e-\p);
		\coordinate (EWiq-1) at ({\t},{-\r*\e-\p});		\node [below] at (EWiq-1) {\tiny$\eta_{i_{q-1}}(W_{i_{q-1}})$};		
			\draw[line width=1.5pt,red](\t-\r*\e,-\p)--(\t,-\p);	
			\draw(\t-\r*\e,-\p)--(\t+\r*\e,-\p);	
			\draw[domain=\t-\r*\e:\t+\k*\r*\e, variable=\s,blue] plot ({\s},{-sqrt(\s-\t-\k*\r*\e+\RR)-\p+sqrt(\RR)});
			\def\s{\t-\k*\r*\e};
			\coordinate (zq-1) at ({\s},{-sqrt(\s-\t-\k*\r*\e+\RR)-\p+sqrt(\RR)});	\filldraw (zq-1) circle (1pt);	\node [above] at (zq-1) {\tiny$z^{q-1}$};
			\coordinate (wq-1) at ({\s},{ \b*(\s-\t+\c)*(\s-\t+\c)-\p-\h});			\filldraw (wq-1) circle (1pt);	\node [below] at (wq-1) {\tiny$w^{q-1}$};
			\def\s{\t+\k*\r*\e};
			\coordinate (zq) at ({\s},{-sqrt(\s-\t-\k*\r*\e+\RR)-\p+sqrt(\RR)});	\filldraw (zq) circle (1pt);	
			\coordinate (wtq) at ($(zq)-(\h/2,\h)$);								\filldraw (wtq) circle (1pt);	\node [below] at (wtq) {\tiny$\widetilde w^{q}$};
			\draw [green] (wq-1)--(wtq);
		
	\draw(-2.5*\r*\e,-\r*\e-\p)--(-2.5*\r*\e,\r*\e-\p);	
		\draw(-0.5*\r*\e,-\r*\e-\p)--(-0.5*\r*\e,\r*\e-\p);	
		\draw(-2.5*\r*\e, -\r*\e-\p)--(-0.5*\r*\e,-\r*\e-\p);	
		\draw(-2.5*\r*\e,  \r*\e-\p)--(-0.5*\r*\e,\r*\e-\p);
		\coordinate (EWik-1) at ({-1.5*\r*\e},{-\r*\e-\p});		\node [below] at (EWik-1) {\tiny$\eta_{i_{k-1}}(W_{i_{k-1}})$};				
			\def\t{0.5*\i};
			\draw[line width=1.5pt,red](-0.5*\r*\e,-\p)--(-2.5*\r*\e,-\p);	
			\draw(-0.5*\r*\e,-\p)--(-2.5*\r*\e,-\p);	
			\draw[domain=-2.5*\r*\e:-0.5*\r*\e, variable=\s,blue] plot ({\s},{-\b*(\s+\t+\c)*(\s+\t+\c)-\p+\h});
			\def\s{-\t-\k*\r*\e};
			\coordinate (zk-1) at ({\s},{-\b*(\s+\t+\c)*(\s+\t+\c)-\p+\h});	\filldraw (zk-1) circle (1pt);	\node [above] at (zk-1) {\tiny$z^{k-1}$};
			\coordinate (wk-1) at ({\s},{ \b*(\s+\t+\c)*(\s+\t+\c)-\p-\h});	\filldraw (wk-1) circle (1pt);	\node [below] at (wk-1) {\tiny$w^{k-1}$};
			\def\s{-\t+\k*\r*\e};
			\coordinate (zk) at ({\s},{-\b*(\s+\t+\c)*(\s+\t+\c)-\p+\h});	\filldraw (zk) circle (1pt);	
			\coordinate (wtk) at ({\s},{-\b*(\s+\t+\c)*(\s+\t+\c)-\p-0.7*\h});	\filldraw (wtk) circle (1pt);	\node [below] at (wtk) {\tiny$\widetilde w^{k}$};
			\draw [green] (wk-1)--(wtk);
				
	\draw(2.5*\r*\e,-\r*\e-\p)--(2.5*\r*\e,\r*\e-\p);	
		\draw(0.5*\r*\e,-\r*\e-\p)--(0.5*\r*\e,\r*\e-\p);	
		\draw(2.5*\r*\e, -\r*\e-\p)--(0.5*\r*\e,-\r*\e-\p);	
		\draw(2.5*\r*\e,  \r*\e-\p)--(0.5*\r*\e,\r*\e-\p);
		\coordinate (EWik) at ({1.5*\r*\e},{-\r*\e-\p});		\node [below] at (EWik) {\tiny$\eta_{i_k}(W_{i_k})$};		
			\def\t{0.5*\i};
			\draw[line width=1.5pt,red](0.5*\r*\e,-\p)--(2.5*\r*\e,-\p);	
			\draw(0.5*\r*\e,-\p)--(2.5*\r*\e,-\p);	
			\draw[domain=0.5*\r*\e:2.5*\r*\e, variable=\s,blue] plot ({\s},{-\b*(\s-\t-\c)*(\s-\t-\c)-\p+\h});
			\def\s{\t-\k*\r*\e};
			\coordinate (zk) at ({\s},{-\b*(\s-\t-\c)*(\s-\t-\c)-\p+\h});	\filldraw (zk) circle (1pt);	\node [above] at (zk) {\tiny$z^{k}$};
			\coordinate (wk) at ({\s},{ \b*(\s-\t-\c)*(\s-\t-\c)-\p-\h});	\filldraw (wk) circle (1pt);	\node [below] at (wk) {\tiny$w^{k}$};
			\def\s{\t+\k*\r*\e};
			\coordinate (zk+1) at ({\s},{-\b*(\s-\t-\c)*(\s-\t-\c)-\p+\h});	\filldraw (zk+1) circle (1pt);	
			\coordinate (wtk+1) at ({\s},{-\b*(\s-\t-\c)*(\s-\t-\c)-\p-\h});	\filldraw (wtk+1) circle (1pt);	\node [below] at (wtk+1) {\tiny$\widetilde w^{k+1}$};
			\draw [green] (wk)--(wtk+1);
\end{tikzpicture}
\end{center}

Consider the mapping $F\colon\mathbb R^n\to\mathbb R^2$ defined by $F(x):=(f(x),\|x\|^2).$ In light of~\cite[Proposition 2.6.2(e)]{Clarke1990}, for $x\ne 0,$ we have 
$$\partial F(x)\subset
\partial f(x)\times \{2x\}=\left\{\left(
\begin{array}{cl}
v\\
2x
\end{array}
\right):\ v\in\partial f(x)\right\}.$$
Assume for contradiction that $x$ and $v$ are linearly independent for all $x\in X$ and all $v\in\partial f(x).$
Then we have $\rank(w)=2$ for all $x\in X$ and all $w\in\partial F(x).$
In view of Lemma~\ref{CR}, for each $x\in X$, there exist an open neighborhood $W_x$ of $x$ and a bi-Lipschitz homeomorphism $\eta_x\colon W_x\to\eta_x(W_x)\subset\mathbb R^n$ with $\eta_x(x)=0$ such that 
\begin{equation}\label{Feta}F\circ\eta_x^{-1}(u_1,u_2,\dots,u_n)=(u_1,u_2)+F(x)=(u_1,u_2)+(f(x),R(\epsilon)^2),\end{equation}
for all $u=(u_1,\dots,u_n)\in\eta_x(W_x).$ 
For each $x$, shrinking $W_x$ if necessary so that $x^*,y^*\not\in W_x$.
It is clear that there is $\rho_x>0$ such that 
$$B_x:=\{u\in\mathbb R^n:\ |u_i|<\rho_x,\ i=1,\dots,n\}\subset\eta_x(W_x).$$
Shrinking $W_x$ more if necessary so that $B_x=\eta_x(W_x)$. 
By construction, for any $(u_1,\dots,u_n)\in B_x,$~\eqref{Feta} is equivalent to the following equalities
\begin{equation}\label{Feta1}f(\eta_x^{-1}(u_1,u_2,\dots,u_n))=u_1+f(x) \ \text{ and }\ \|\eta_x^{-1}(u_1,u_2,\dots,u_n)\|^2=u_2+R(\epsilon)^2.\end{equation}
Let $Z_x\subset W_x$ be an open ball centered at $x.$
As $\{Z_x:\ x\in X\}$ is an open cover of $X$, by compactness, there are distinct points $\{x^1,\dots,x^p\}\in X$ such that 
$$X\subset\bigcup_{i=1}^p Z_{x^i}.$$ 
Let $\lambda>0$ be the constant depending on the cover $\{Z_{x^i}:\ i=1,\dots,p\}$ given by Lemma~\ref{Cover} and $L\geqslant 1$ be a common Lipschitz constant for $\eta_1:=\eta_{x^{1}},\dots,\eta_p:=\eta_{x^{p}},\eta_1^{-1},\dots,\eta_p^{-1}.$ 
Set $\nu:=\frac{\lambda}{2L^2}.$ 
Let $\epsilon'>0$ and $\gamma$ be, respectively, the constant and the piecewise linear curve depending on $\epsilon'$ determined by Lemma~\ref{1}. 
We will show that if $\epsilon'>0$ is small enough, then 
\begin{equation}\label{nu}\gamma\setminus \mathbb B_{R(\epsilon)}^{n}\subset N_\nu(f^{-1}[c,c+\epsilon]\cap\mathbb S^{n-1}_{R(\epsilon)}).\end{equation}
Indeed, assume for contradiction that for all $k>0$ large enough, there is $\gamma^k\in\mathcal A\left(R(\epsilon)+\frac{1}{k},\epsilon\right)$ and
$$y^k\in\gamma^k\setminus (\mathbb B_{R(\epsilon)}^{n}\cup N_\nu(f^{-1}[c,c+\epsilon]\cap\mathbb S^{n-1}_{R(\epsilon)}))\ne\emptyset.$$ 
Observe that $R(\epsilon)\leqslant \|y^k\|\leqslant R(\epsilon)+\frac{1}{k}$.  
So by compactness, the sequence $y^k$ has at least a cluster point, say $y^0.$ Clearly $\|y^0\|=R(\epsilon)$. 
In addition, as $c<f(y^k)\leqslant c+\epsilon$ in view of Lemma~\ref{1}, one has $c\leqslant f(y^0)\leqslant c+\epsilon.$
Hence $$y^0\in f^{-1}[c,c+\epsilon]\cap\mathbb S^{n-1}_{R(\epsilon)}.$$
On the other hand, since $y^k\not\in N_\nu(f^{-1}[c,c+\epsilon]\cap\mathbb S^{n-1}_{R(\epsilon)})$, it follows that 
$$\dist(y^k, f^{-1}[c,c+\epsilon]\cap\mathbb S^{n-1}_{R(\epsilon)})>\nu.$$ 
By letting $k\to+\infty$, we get
$$\dist(y^0, f^{-1}[c,c+\epsilon]\cap\mathbb S^{n-1}_{R(\epsilon)})\geqslant\nu,$$
which is a contradiction. 
Therefore~\eqref{nu} must hold for $\epsilon'>0$ sufficiently small. 
We will deform $\gamma$ to get an other piecewise linear curve $\widetilde\gamma\in \mathcal A(R(\epsilon),\epsilon)$ such that $\|\widetilde\gamma(t)\|<R(\epsilon)$ for any $t\in[0,1]$. 
As $\gamma$ is piecewise linear, $\gamma\setminus \mathbb B_{R(\epsilon)}^{n}$ has finitely many connected components. 
Without loss of generality, assume that $\gamma\setminus \mathbb B_{R(\epsilon)}^{n}$ is connected.
In fact, if $\gamma\setminus \mathbb B_{R(\epsilon)}^{n}$ is not connected, then it is enough to apply the process below on each connected component of $\gamma\setminus \mathbb B_{R(\epsilon)}^{n}$. 

Let $[a,b]\subset(0,1)$ be the interval such that $\gamma(t)\geqslant R(\epsilon)$ if and only if $t\in [a,b].$ 
In view of Lemma~\ref{Decompose}, there exist finite sequences $a=:T_0<\cdots<T_q:=b$ and $i_0\ne i_1\ne\cdots\ne i_{q-1}$ with $i_k\in\{1,\dots,p\}\ (k=0,\dots,q)$ such that:
\begin{enumerate}[{\rm (a)}]
\item $\gamma[T_{k-1},T_{k}]\subset W_{i_{k-1}}$ for $k=1,\dots,q$; and
\item $\eta_{i_k}^{-1}(w^k)\in W_{i_{k-1}}\cap W_{i_{k}}$ for $k=1,\dots,q-1,$ where 
$$z^k:=\eta_{i_k}(\gamma(T_k))\ \text{ and }\ w^k:=(z^k_1,w^k_2,z^k_3,\dots,z^k_n)$$
with
$$w_2^k=\left\{\begin{array}{llll}
-z_2^k             &\text{if}& z_2^k\ne 0\\
-\frac{\lambda}{L} &\text{if}& z_2^k=   0.
\end{array}\right.$$
\end{enumerate}
Since $\gamma$ is continuous, for $\delta>0$ small enough, we have  
$$a-\delta\in[0,1],\ b+\delta\in[0,1],$$ 
$$\gamma(a-\delta)\in W_{i_0}\ \text{and}\ \gamma(b+\delta)\in W_{i_{q-1}}.$$
Let 
$$a-\delta=:\widetilde T_0<\widetilde T_1:=T_1<\dots<\widetilde T_{q-1}:=T_{q-1}<\widetilde T_q:=b+\delta,$$
\begin{equation}\label{0q}
w^0:=\eta_{i_0}(\gamma(\widetilde T_0))\in \eta_{i_0}(W_{i_0})=B_{x_{i_0}},\ \widetilde w^q:=\eta_{i_{q-1}}(\gamma(\widetilde T_{q}))\in\eta_{i_{q-1}}(W_{i_{q-1}})=B_{x_{i_{q-1}}}
\end{equation}
and 
$$\widetilde w^k:=\eta_{i_{k-1}}(\eta_{i_{k}}^{-1}(w^{k}))\ \text{ for }\ k=1,\dots,q-1.$$
Define the curve
$$\widetilde\gamma(t):=\left\{
\begin{array}{llll}
\gamma(t) & \text{if} & t\in [0,1]\setminus (\widetilde T_{0},\widetilde T_{q})\\
\eta_{i_{k-1}}^{-1}\left[w^{k-1}+\frac{t-\widetilde T_{k-1}}{\widetilde T_{{k}}-\widetilde T_{{k-1}}}(\widetilde w^{k}- w^{{k-1}})\right] & \text{if} & t\in [\widetilde T_{{k-1}},\widetilde T_{{k}}]\ (k=1,\dots,q).
\end{array}\right.
$$
We note the following facts:
\begin{itemize}
\item $\widetilde\gamma(\widetilde T_0)=\eta_{i_0}^{-1}(w^0)=\gamma(\widetilde T_{0}),$
\item $\eta_{i_k}^{-1}(w^k)=\eta_{i_{k-1}}^{-1}(\widetilde w^{k})$ for $k=1,\dots,q-1,$
\item $\widetilde\gamma(\widetilde T_{q})=\eta_{i_{q-1}}^{-1}(\widetilde w^q)=\gamma(\widetilde T_{q}),$ and
\item $w^{k-1},\widetilde w^{k}\in B_{x_{i_k}}$ for $k=1,\dots,q.$
\end{itemize}
Thus $\widetilde\gamma$ is continuous. 
We will prove that $\|\widetilde\gamma(t)\|<R(\epsilon)$ and $f(\widetilde\gamma(t))<c+\epsilon$ for any $t\in[0,1]$, which contradicts the definition of $R(\epsilon).$
Clearly, 
$$\|\widetilde\gamma(t)\|<R(\epsilon)\ \ \text { and }\ \ f(\widetilde\gamma(t))<c+\epsilon\ \text{ for all }\ t\in[0,1]\setminus (\widetilde T_{0},\widetilde T_{q}).$$
So it remains to show that $\|\widetilde\gamma(t)\|<R(\epsilon)$ and $f(\widetilde\gamma(t))<c+\epsilon$ for all $t\in(\widetilde T_{0},\widetilde T_{q}).$

This and~\eqref{0q} implies
$$\|\eta_{i_0}^{-1}(w^0)\|=\|\gamma(\widetilde T_{0})\|<R(\epsilon)\ \text{ and }\ \|\eta_{i_{q-1}}^{-1}(\widetilde w^q)\|=\|\gamma(\widetilde T_{q})\|<R(\epsilon).$$
On the other hand, by~\eqref{Feta1}, 
$$\|\eta_{i_0}^{-1}(w^0)\|^2=w^0_2+R(\epsilon)^2\ \text{ and }\ \|\eta_{i_{q-1}}^{-1}(\widetilde  w^q)\|^2=\widetilde  w^q_2+R(\epsilon)^2.$$
Hence 
\begin{equation}\label{r1}w^0_2<0 \ \text{ and }\ \widetilde  w^q_2<0.\end{equation}
In light of~\eqref{Feta1},~(b) and the fact $\widetilde T_k=T_k\in [a,b]=I(\gamma)$ for $k=1,\dots,q-1,$ we have 
$$z^k_2+R(\epsilon)^2=\|\eta_{i_k}^{-1}(z^k)\|^2=\|\gamma(\widetilde T_k)\|^2\geqslant R(\epsilon)^2.$$
So $z^k_2\geqslant 0.$
By construction, it follows that 
\begin{equation}\label{r2}w_2^k<0\ \text{ for }\ k=1,\dots,q-1.\end{equation}
Hence 
$$\widetilde w_2^k+R(\epsilon)^2=\|\eta_{i_{k-1}}^{-1}(\widetilde w^k)\|^2=\|\eta_{i_{k}}^{-1}(w^k)\|^2=w_2^k+R(\epsilon)^2<R(\epsilon)^2,$$
where the first and third equalities follows from~\eqref{Feta1}. This yields
\begin{equation}\label{r3}\widetilde w_2^k<0\ \text{ for }\ k=1,\dots,q-1.\end{equation}
Combining~\eqref{r1},~\eqref{r2} and~\eqref{r3}, we get
\begin{equation}\label{ww}w_{k-1}, \widetilde w^{k}\in B_{x_{i_{k-1}}}\cap\{u\in\mathbb R^n:\ u_2<0\} \ \text{ for }\ k=1,\dots,q.\end{equation}

As $\gamma\in\mathcal A(R(\epsilon)+\epsilon',\epsilon),$ we get
$$f(\gamma(\widetilde T_{0}))< c+\epsilon\ \text{ and }\ f(\gamma(\widetilde T_{q}))< c+\epsilon.$$ 
On the other hand, by~\eqref{Feta1} and the facts $\gamma(\widetilde T_{0})\in W_{i_0}$ and $\gamma(\widetilde T_{q})\in W_{i_{q-1}}$ (by~\eqref{0q}), we get
$$f(\gamma(\widetilde T_{0}))=w^0_1+f(x^{i_0})\ \text{ and }\ f(\gamma(\widetilde T_{q}))=\widetilde w^q_1+f(x^{i_{q-1}}).$$ 
Thus 
\begin{equation}\label{f1}w^0_1<c+\epsilon-f(x^{i_0}) \ \text{ and }\ \widetilde w^q_1<c+\epsilon-f(x^{i_{q-1}}).\end{equation}
In view of~\eqref{Feta1}, for $k=1,\dots,q-1,$ we have 
$$f(\eta_{i_{k}}^{-1}(w^k))=w_1^k+f(x^{i_k})=z_1^k+f(x^{i_{k}})=f(\eta_{i_k}^{-1}(z^k))=f(\gamma(\widetilde T_k))<c+\epsilon.$$ 
So
\begin{equation}\label{f2}w_1^k<c+\epsilon-f(x^{i_{k}}) \ \text{ for }\ k=1,\dots,q-1.\end{equation}
In addition,
$$\widetilde w_1^k+f(x^{i_{k-1}})=f(\eta_{i_{k-1}}^{-1}(\widetilde w^k))=f(\eta_{i_{k}}^{-1}(w^k))<c+\epsilon.$$
This implies
\begin{equation}\label{f3}\widetilde w_1^k<c+\epsilon-f(x^{i_{k-1}}) \ \text{ for }\ k=1,\dots,q-1.\end{equation}
From~\eqref{f1},~\eqref{f2} and~\eqref{f3}, we get
\begin{equation}\label{ff}w_{k-1},\widetilde w^{k}\in B_{x_{i_{k-1}}}\cap\{u\in\mathbb R^n:\ u_1<c+\epsilon-f(x^{i_{k-1}})\} \ \text{ for }\ k=1,\dots,q.\end{equation}
Now for $k=1,\dots,q$, by~\eqref{ww} and~\eqref{ff}, we get
$$w^{k-1},\widetilde w^{k}\in B_{x_{i_{k-1}}}\cap \{u\in\mathbb R^n:\ u_1<c+\epsilon-f(x^{i_{k-1}}),\ u_2<0\}.$$
By convexity, $B_{x_{i_{k-1}}}\cap \{u\in\mathbb R^n:\ u_1<c+\epsilon-f(x^{i_{k-1}}),\ u_2<0\}$ also contains the segment $[w^{k-1},\widetilde w^{k}]$ joining $w^{k-1}$ and $\widetilde w^{k}.$ 
Clearly for any $w=(w_1,\dots,w_n)\in[w^{k-1},\widetilde w^{k}],$ we have 
$$w_1<c+\epsilon-f(x^{i_{k-1}})\ \text{ and }\ w_2<0.$$ 
This and~\eqref{Feta1} yield
$$f(\eta_{i_{k-1}}^{-1}(w))=w_1+f(x^{i_{k-1}})<c+\epsilon\ \text{ and }\ \|\eta_{i_{k-1}}^{-1}(w)\|=\sqrt{w_2+R(\epsilon)^2}<R(\epsilon).$$
Consequently $\|\widetilde\gamma(t)\|<R(\epsilon)$ for any $t\in(\widetilde T_0,\widetilde T_q)$ and so this holds for all $t\in[0,1]$. 
This contradiction ends the proof of the lemma.
\end{proof}
Now we are in position to finish the proof of Proposition~\ref{Prop2}. 
\begin{proof}[Proof of Proposition~\ref{Prop2}]
For each integer $k>0$ large enough such that $\epsilon=\frac{1}{k}$ satisfies the assumptions of Lemma~\ref{1}, in view of Lemma~\ref{key}, there are $x^k\in X_k:=f^{-1}[c,c+\frac{1}{k}]\cap\mathbb S^{n-1}_{R\left(\frac{1}{k}\right)}$ and $v^k\in\partial f(x^k)$ such that $x^k$ and $v^k$ are linear dependent.
Since $R\left(\frac{1}{k}\right)\to+\infty$ as $k\to+\infty$, we have $x^k\to \infty$. 
Furthermore, it is clear that $f(x^k)\to c.$
Consequently $c\in T_\infty(f).$
\end{proof}


\end{document}